\documentclass[12pt]{amsart}

\topmargin 0.1in
\textheight8.5in
\oddsidemargin0.15in
\evensidemargin0.15in
\textwidth6.1in
\advance\hoffset by -0.5 truecm

\usepackage{amssymb}
\usepackage{amscd}
\usepackage{verbatim}
\usepackage{epsfig}
\usepackage{euscript}
\usepackage[colorlinks=true]{hyperref}
\hypersetup{urlcolor=blue, citecolor=red}

\newcommand{\supp}{\operatorname{supp}}

\newcommand{\WF}{\operatorname{WF}}

\newcommand{\diag}{\mathrm{diag}}

\newcommand{\Ker}{\operatorname{Ker}}

\def \R{\mathbb R}
\def \p{\partial}

\def \<{\langle}
\def \>{\rangle}

\def \1N{\sum_{k=1}^{N}}
\def \A{\mathcal A}
\def \B{\mathcal B}

\def \C{\mathbb C}

\def \bfL{{\bf L}^2}
\def \bfH{{\bf H}}
\def \dgamma{\dot\gamma}
\newcommand{\id}{\operatorname{id}}

\setcounter{secnumdepth}{3}
\newtheorem{lemma}{Lemma}[section]
\newtheorem{prop}[lemma]{Proposition}
\newtheorem{thm}[lemma]{Theorem}

\newtheorem*{thm*}{Theorem}
\newtheorem*{prop*}{Proposition}
\newtheorem*{cor*}{Corollary}
\newtheorem*{conj*}{Conjecture}
\numberwithin{equation}{section}
\theoremstyle{remark}
\newtheorem{rem}[lemma]{Remark}
\newtheorem*{rem*}{Remark}
\theoremstyle{definition}

\newtheorem*{Def*}{Definition}

\begin{document}

\title[Inverse problems for generic connections]{Generic injectivity and stability of inverse problems for connections}

\author[Hanming Zhou]{Hanming Zhou}
\address{Department of Pure Mathematics and Mathematical Statistics, University of Cambridge, Cambridge CB3 0WB, UK}
\email{hz318@dpmms.cam.ac.uk}

\begin{abstract}
We consider the nonlinear problem of determining a connection and a Higgs field from the corresponding parallel transport along geodesics on a compact Riemannian manifold with boundary, in any dimension. The problem can be reduced to an integral geometry question of some attenuated geodesic ray transform through a pseudolinearization argument. We show injectivity (up to natural obstructions) and stability estimates for both the linear and nonlinear problems for generic simple metrics and generic connections and Higgs fields, including the real-analytic ones. We consider the problems on simple manifolds in order to make the exposition of the main ideas clear and concise, many results of this paper are still true under much weaker geometric assumptions, in particular conjugate points and trapped geodesics are allowed and the boundary is not necessarily convex.
\end{abstract}

\maketitle

\section{Introduction} \label{sec_introduction}

Let $(M,g)$ be a compact Riemannian manifold with smooth boundary $\p M$, $n=\dim M\geq 2$. Let $A$ be a connection on the trivial bundle $M\times \mathbb C^k$ of rank $k$, which simply means that $A$ is a $k\times k$ matrix whose entries are 1-forms on $M$ with complex values. We also introduce a Higgs field $\Phi\in C^\infty(M;\C^{k\times k})$, a complex matrix function on $M$, and denote the pair $(A,\Phi)$ by $\A$. We define the {\it parallel transport} associated with $\A$ of a vector $u_0\in \mathbb C^k$ along a geodesic $\gamma:[0,T]\to M$, $\gamma(0),\gamma(T)\in \p M$, as the solution of the following ODE
\begin{equation}\label{parallel transport}
\dot u+\A(\gamma,\dot\gamma)u=0,\quad u(0)=u_0.
\end{equation}
Here $\A(\gamma,\dgamma)=A_{\gamma}(\dgamma)+\Phi(\gamma)$. In the mean time, there is a fundamental matrix solution $U:[0,T]\to GL(k,\mathbb C)$ of \eqref{parallel transport} which satisfies 
\begin{equation}\label{fundamental matrix}
\dot U+\A(\gamma,\dgamma)U=0,\quad U(0)=\id.
\end{equation}
It is easy to see that $u(t)=U(t)u_0$, thus the information of the parallel transport is encoded in the fundamental matrix $U$. We are interested in the inverse problem of recovering the pair $(A,\Phi)$ on $M$ from the information of the parallel transport at the end point, i.e. $U(T)$, given there are enough geodesics $\gamma$ covering the manifold.

To make the exposition of the main ideas clear and concise, in this paper we assume that $(M,g)$ is a {\it simple} manifold, which means that $\p M$ is strictly convex and the exponential map is a diffeomorphism at any point $x\in M$. In the mean time, we can always assume that $(M,\p M)$ is equipped with a real-analytic atlas (the metric $g$ may not be real-analytic).

Let $SM$ be the unit sphere bundle of $M$ and $\p SM$ be its boundary, we define two subsets of $\p SM$
$$\p_\pm SM:=\{(x,v)\in \p SM: \,\pm\<v,\nu(x)\>_g\geq 0\},$$
where $\nu(x)$ is the unit inward normal vector to $\p M$ at $x$. Given $(x,v)\in SM$, we denote $\gamma_{x,v}$ the unique maximal geodesic on $M$ satisfying $\gamma_{x,v}(0)=x,\,\dgamma_{x,v}(0)=v$, let $\tau(x,v)$ ($\tau_-(x,v)$) be the positive (negative) time the geodesic $\gamma_{x,v}$ exits $M$. $\tau$ and $\tau_-$ are smooth in $SM\setminus S(\p M)$ and continuous on $SM$. 
On the other hand, for any $(x,v)\in SM$, there exists unique $(x_0,v_0)\in \p_+SM$ such that $\gamma_{x_0,v_0}(-\tau_-(x,v))=(x,v)$. Thus one can define $U_{\A}:SM\to GL(k,\mathbb C)$ by 
$$U_{\A}(x,v)=U(-\tau_-(x,v)),$$
where $U$ is the fundamental matrix solution of \eqref{fundamental matrix} along $\gamma_{x_0,v_0}$. $U_{\A}$ satisfies the following transport equation 
$$XU_{\A}+\A U_{\A}=0,\quad U_{\A}|_{\p_+SM}=\id,$$
where $X$ is the generating vector field of the geodesic flow. It is easy to see that $U_\A$ has the same regularity as $\tau$. 

Now we can define the {\it scattering data} associated with $\A$
$$C_\A: \p_+SM\to GL(k,\mathbb C)$$
by $C_\A(x,v):=U_\A(\gamma_{x,v}(\tau(x,v)),\dgamma_{x,v}(\tau(x,v)))$, or $C_\A=U_\A|_{\p_-SM}$ in short. Our first result is regarding the recovery of $\A=(A,\Phi)$ from $C_\A$. Notice that there is a natural gauge of this problem: let $p: M\to GL(k,\mathbb C)$ with $p|_{\p M}=\id$, then $C_{A,\,\Phi}=C_{p^{-1}dp+p^{-1}A p,\, p^{-1}\Phi p}$. Define $d_\A p:=[(d+A)p,\Phi p]$, then the equality just means that $C_\A=C_{p^{-1}d_\A p}$. Thus one can only expect to determine $\A$ up to the gauge. 

\begin{thm}\label{nonlinear thm}
Let $M$ be a real-analytic simple manifold with real-analytic metric $g_0$. Let $\A_0,\, \B_0$ be real-analytic, there exists $\epsilon>0$ such that whenever there are another metric $g$ and pairs $\A=(A,\Phi)$, $\B=(B,\Psi)$ satisfying 
$$\|g-g_0\|_{C^4(M)}\leq \epsilon,\quad \|\A-\A_0\|_{C^3(M)}+\|\B-\B_0\|_{C^3(M)}\leq \epsilon,$$
(1) if $C_\A=C_\B$ w.r.t. the metric $g$, then there is $p:M\to GL(k,\mathbb C)$ with $p|_{\p M}=\id$, such that $\B=p^{-1}d_\A p$;\\
(2) if $\|\A_0-\B_0\|_{C^2(M)}\leq \epsilon$ and $\iota^*\A=\iota^*\B$ with $\iota:\p M\to M$ the canonical inclusion, then there exists $p:M\to GL(k,\mathbb C)$ with $p|_{\p M}=\id$ such that the following stability estimate holds w.r.t. the metric $g$
$$\|\B -p^{-1}d_\A p \|_{\bfL(M)}\leq C \|C_\B-C_\A\|_{H^1(\p_+SM)}$$
for some uniform constant $C>0$ which depends only on $g_0,\,\A_0,\,\B_0$.
\end{thm}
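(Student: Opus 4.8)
The plan is to reduce the nonlinear problem to a linear (attenuated ray transform) statement via a pseudolinearization identity, and then invoke the generic injectivity and stability results for that linear transform (which the paper presumably establishes beforehand). First I would set up the comparison of the two scattering data: given $\A$ and $\B$ with $C_\A = C_\B$, one writes $W := U_\B U_\A^{-1}$ (transported along the same geodesic flow for the common metric $g$) and computes $XW + \A_\piece W - W\,?$; more precisely, a direct computation shows $W$ satisfies a transport equation of the form $XW = -\B W + W\A$, with boundary value $\id$ on $\partial_+ SM$ and — crucially, because $C_\A = C_\B$ — also $\id$ on $\partial_- SM$. Thus the "difference" $\B - \A$ (conjugated appropriately) integrates to zero along every geodesic, which is exactly the statement that a matrix-valued attenuated geodesic ray transform of $\B - \A$ vanishes. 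The attenuation here is governed by the pair $\A$ (or an interpolation between $\A$ and $\B$), and the unknown $\B - \A$ decomposes into a 1-form part and a function part, matching the structure of $\A = (A,\Phi)$.

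Next I would quote the linear result established earlier in the paper: for $(g,\A)$ in a $C^k$-neighborhood of the real-analytic data $(g_0,\A_0)$, the attenuated geodesic ray transform $I_\A$ on pairs (1-form, function) is injective up to the natural gauge $f \mapsto d_\A p$ with $p|_{\partial M} = \id$, with a stability estimate $\|f - d_\A p\|_{\bfL} \le C\|I_\A f\|_{H^1}$. The genericity and real-analyticity enter precisely at this linear level — via an analytic-microlocal / Stefanov–Uhlmann-type argument showing the transform is Fredholm of index zero with trivial kernel for analytic data, hence for a neighborhood, and the constant $C$ can be taken uniform on a slightly smaller neighborhood. Applying this to $f = \B - \A$ (or rather to the linearization obtained from the pseudolinearization, which differs from $\B - \A$ by terms quadratic in $\|\A - \B\|_{C^?}$, controlled by the smallness $\epsilon$) yields, for part (1), that $I_\A(\B - \A + Q) = 0$ where $Q$ is the quadratic remainder; a fixed-point / implicit-function argument then upgrades this to the exact gauge equivalence $\B = p^{-1} d_\A p$. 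For part (2), the same identity combined with the stability estimate gives $\|\B - p^{-1}d_\A p\|_{\bfL} \le C\|I_\A(\B - \A)\|_{H^1} \le C'\|C_\B - C_\A\|_{H^1(\partial_+SM)}$, where the last inequality relates the ray-transform data of the pseudolinearized difference to the scattering-data difference $C_\B - C_\A$ via the transport equation for $W$ — this is where the boundary condition $\iota^*\A = \iota^*\B$ is used to avoid a boundary contribution and to keep $W - \id$ controlled near $\partial SM$.

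The main obstacle I expect is twofold. First, the pseudolinearization must be carried out so that the "effective" attenuated transform appearing is exactly one covered by the linear theorem, and the nonlinear remainder $Q$ must be estimated in the right norms: one needs $\|Q\|_{H^1} \lesssim \epsilon\,\|\B - \A\|$ (or similar) so that it can be absorbed, which forces careful bookkeeping of how many derivatives of $g$, $\A$, $\B$ are consumed — this is the origin of the $C^4$, $C^3$, $C^2$ thresholds in the hypotheses. Second, promoting the vanishing (or smallness) of the transform to the gauge statement $\B = p^{-1}d_\A p$ requires solving $d_\A p = \B - \A$ (a transport/elliptic problem for $p$ with $p|_{\partial M} = \id$) and checking $p \in GL(k,\C)$; this is a standard but delicate step, typically done by a contraction mapping in a neighborhood of $\A_0$, using that the gauge is a smooth submersion and that the obstruction cokernel is killed by the analytic genericity. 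The uniformity of $C$ in part (2) — independence of $\A, \B$ within the $\epsilon$-ball, depending only on $(g_0,\A_0,\B_0)$ — follows from the fact that the linear stability constant varies continuously (indeed is locally bounded) in the data, so one shrinks $\epsilon$ once and for all.
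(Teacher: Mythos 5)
Your high-level strategy (pseudolinearization plus the generic linear injectivity/stability theorems) is the paper's strategy, but there is a genuine conceptual gap at the central step. The pseudolinearization identity is \emph{exact}, not a linearization with a quadratic remainder: setting $F(t)=W_\B(\phi(t))W_\A^{-1}(\phi(t))$ and differentiating gives
\begin{equation*}
F(T)-F(0)=\int_0^T W_\B(\B-\A)W_\A^{-1}\,dt = I_{\hat\A}(\B-\A)(\gamma),
\end{equation*}
where the weight $\hat W U=W_\B U W_\A^{-1}$ satisfies $X\hat W=\hat W\hat\A$ with the \emph{two-sided} attenuation $\hat\A U=\B U-U\A$ acting on $\C^{k\times k}$-valued pairs. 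So $C_\A=C_\B$ yields exactly $I_{\hat\A}(\B-\A)=0$; there is no remainder $Q$ and no fixed-point or implicit-function argument. All of the nonlinearity is absorbed into the unknown attenuation $\hat\A$, and this is precisely where the perturbative form of the linear results enters: $\hat\A$ is $C^3$-close to the analytic reference $\hat\A_0 U=\B_0U-U\A_0$, so Theorem \ref{analytic s-injective} (s-injectivity of $I_{\hat\A_0}$) combined with Theorem \ref{linear stability}(2) gives s-injectivity and uniform stability for $I_{\hat\A}$ itself. Your version, with attenuation $\A$ and a remainder to be absorbed, would send you into a much harder iteration whose convergence is not supported by the stated $C^4/C^3/C^2$ hypotheses. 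A second, smaller gap: once $\B-\A=d_{\hat\A}U$ with $U|_{\p M}=0$, the invertibility of $p$ in part (1) does not need a contraction argument; $u=\id-U$ satisfies the homogeneous transport equation $Xu+\hat\A u=0$, $u|_{\p M}=\id$, hence is a fundamental solution of a linear matrix ODE along each geodesic and is automatically invertible, and $p=u^{-1}$ works with no smallness required.

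For part (2) you also miss the actual role of $\iota^*\A=\iota^*\B$: it is used to construct a gauge $p$ with $p|_{\p M}=\id$ and $(dp,0)|_{\p M}=(\B-\A)|_{\p M}$, replacing $\A$ by $\A'=p^{-1}dp+p^{-1}\A p$ (which has the same scattering data) so that $\B-\A'$ vanishes on $\p M$ and extends by zero to an element of $\bfH^1_c(M_1^o)$. This $H^1_c$ extension is what lets one bound $\|N_{\hat\A'}(\B-\A')\|_{\bfH^1(M_1)}$ by $\|I_{\hat\A'}(\B-\A')\|_{H^1}$ and hence by $\|C_\B-C_\A\|_{H^1(\p_+SM)}$ via the exact identity; it is not about suppressing a boundary term in an integration by parts. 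Here smallness \emph{is} used, but only to show that $V=\id-U$ (from the solenoidal decomposition of $\B-\A'$) is invertible, via $\|U\|_{C^2}\lesssim\epsilon$.
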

Notice that $\A_0$ is complex-valued, we say that $\A_0$ is real-analytic if both the real and imaginary parts of $\A_0$ are real-analytic. $\|\cdot\|_{\bfH^k}$ is the natural $H^k$ norm for pairs, $k\geq 0$, see Section 2 for the definition.

Theorem \ref{nonlinear thm} shows that the rigidity result (up to the natural gauge) hold for generic simple metrics and generic connections and Higgs fields, including the real-analytic ones. There are previous works on the determination of connections from the parallel transport along straight lines in the Euclidean spaces \cite{Ve92,FU01,No02,Es04}. Injectivity results are valid on simple surfaces \cite{PSU12}, simple manifolds for connections which are $C^1$ close to a given one with small curvature \cite{Sh00} and negatively curved manifolds with strictly convex boundary \cite{GPSU16}. Though \cite{GPSU16} allows the existence of trapped geodesics, above references on general manifolds all require the connections (and Higgs fields) to be unitary. The only exception is \cite{PSUZ16} which considers manifolds of dimension $\geq 3$ with strictly convex boundary that admits a strictly convex function, in particular the last assumption is true if the manifold has non-negative sectional curvatures. In the current paper, we put no restrictions on the connections and Higgs fields, the dimension or the curvatures, and the simplicity assumption indeed can be much weakened, see Remark \ref{microlocal condition}. In particular our method also applies on simple surfaces ($n=2$) to non-unitary connections. 

To prove Theorem \ref{nonlinear thm}, which is regarding a nonlinear rigidity problem, we will reduce it to an integral geometry problem through a ``linerization" of the scattering data, which is inspired by the idea of \cite{SU98} and already appeared in e.g. \cite{PSU12,PSUZ16}. In particular, this motivates us to consider some type of weighted geodesic ray transforms.

Notice that the inverse of $U_\A$, denoted by $W_\A$, satisfies 
$$XW_\A=W_\A \A, \quad W_\A|_{\p_+SM}=\id.$$
Given $\alpha\in C^\infty(T^*M,\mathbb C^k)$ and $f\in C^\infty(M,\mathbb C^k)$, we consider the following geodesic ray transform along $\gamma_{x,v},\,(x,v)\in \p_+SM$
\begin{align*}
I_\A [\alpha,f](x,v)=\int_0^{\tau(x,v)} W_\A(\gamma_{x,v},\dot\gamma_{x,v})\Big(\alpha_{\gamma_{x,v}}(\dot\gamma_{x,v})+f(\gamma_{x,v})\Big)\,dt
\end{align*}
So $I_\A$ is an attenuated geodesic ray transform with attenuation $\A$. The natural elements of the kernel of $I_\A$ are $d_\A p$ with $p\in C^\infty(M,\C^k)$, $p|_{\p M}=0$. If they consist of the whole kernel, then we say $I_\A$ is {\it s-injective}. When $\A=0$, i.e. $W_\A=\id$, the question is reduced to the injectivity of the usual (unweighted) geodesic ray transform of functions or tensor fields (known as the {\it tensor tomography} problem), which has been extensively studied. The geodesic ray transform of functions \cite{Mu77,MR78} and 1-forms \cite{AR97} are s-injective on simple manifolds. See \cite{PSU13,PSU15} and the survey \cite{PSU14} for recent developments of the tensor tomography problem on simple manifolds. Much less is known for the case with attenuations, the question of the s-injectivity of $I_\A$ is still open on simple manifolds. Some partial answers to this question can be found in e.g. \cite{SaU11,PSU12,GPSU16}. It is also worth mentioning that recently tools from microlocal analysis lead to several new local and global results \cite{UV16,SUV14,Gu16,PSUZ16}.

If one restricts the objects in the real-analytic category, there is another approach by applying the analytic microlocal analysis which was initiated in \cite{SU05} by Stefanov and Uhlmann, and further developed in \cite{SU08} for the ordinary tensor tomography problem. The next theorem, which can be viewed as a generalization, shows that $I_\A$ is s-injective for real-analytic simple metric $g$ and real-analytic $\A$ in any dimension.

\begin{thm}\label{analytic s-injective}
Let $M$ be a real-analytic simple manifold with real-analytic metric $g$, let $\A$ be real-analytic, then $I_\A$ is s-injective.
\end{thm}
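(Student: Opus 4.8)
The plan is to follow the analytic microlocal strategy of Stefanov--Uhlmann \cite{SU05,SU08}, adapted to the attenuated setting with a general (non-unitary) attenuation $\A$. Suppose $I_\A[\alpha,f]=0$ for some pair $(\alpha,f)$, which after identifying the $1$-form part $\alpha$ with a vector field we may regard as a pair of the appropriate type. The first step is to set up an analytic parametrix: the normal operator $N_\A := I_\A^* I_\A$ (with respect to suitable analytic measures on $\p_+SM$) is a pseudodifferential operator, and on a slightly larger real-analytic simple manifold $\tilde M \supset M$ in which $M$ is compactly contained, $N_\A$ can be shown to be an \emph{analytic} pseudodifferential operator. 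Its principal symbol is, up to an elliptic analytic factor, the same as in the unattenuated case — the attenuation $W_\A$ enters only through lower-order and smoothing-type contributions together with a zeroth-order elliptic conjugation — so $N_\A$ is elliptic on the solenoidal part after the usual decomposition. Concretely one writes a potential $p$ (solving a $d_\A$-elliptic boundary value problem, with $p|_{\p M}=0$) so that $(\alpha,f) - d_\A p$ is solenoidal, and it suffices to prove this solenoidal representative vanishes.

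The second step is the analytic-wavefront-set propagation. Since $N_\A$ is an analytic $\Psi$DO and $N_\A(\alpha,f)=0$ (after the gauge reduction, $N_\A$ applied to the solenoidal part is $0$ up to a smoothing/analytic-regularizing error supported away from $M$), analytic elliptic regularity on the interior $M^{\mathrm{int}}$ gives that the solenoidal representative is real-analytic up to the boundary in the interior directions; the key microlocal input is that the analytic wavefront set of the solenoidal part is empty over $M^{\mathrm{int}}$. Here one uses that the relevant conormal directions are all covered: for any $(x,\xi)\in T^*M^{\mathrm{int}}\setminus 0$ there is a geodesic through $x$ conormal to $\xi$ staying in the interior, and along it the attenuated transport $W_\A$ — being the solution of an ODE with real-analytic coefficients — is real-analytic, so it does not destroy the analyticity of the symbol in that direction. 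This is exactly where the real-analyticity hypotheses on $g$ and $\A$ are used in an essential way.

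The third step is to upgrade interior analyticity to the boundary and conclude. One uses the boundary determination: from $I_\A[\alpha,f]=0$ one reads off, via differentiation of the scattering relation at $\p M$ (as in the pseudolinearization already set up in the paper and in \cite{PSU12}), that $\iota^*\alpha$ and the boundary jets of the solenoidal representative vanish to infinite order at $\p M$; combined with interior real-analyticity this forces the solenoidal representative to vanish in a neighborhood of $\p M$. Then a standard layer-stripping / unique-continuation argument for real-analytic functions (the vanishing set being open and closed in the connected $M$) propagates the vanishing to all of $M$. Hence $(\alpha,f)=d_\A p$, i.e. $I_\A$ is s-injective.

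The main obstacle is the second step: verifying that the normal operator $N_\A$ is genuinely an \emph{analytic} pseudodifferential operator despite the non-unitary, non-self-adjoint attenuation. In the unitary case $W_\A$ contributes a unitary (hence harmless) factor; here one must track the full Schwartz kernel of $N_\A$, show the analytic symbol estimates (Gevrey-$1$ bounds on all derivatives of the amplitude) survive the presence of $W_\A$ and $W_\A^{-1}$, and check that the non-self-adjointness does not obstruct the ellipticity on the solenoidal subspace. The parametrix construction of \cite{SU05} must be revisited with the attenuation carried along; once the analytic amplitude estimates are in place, the rest of the argument is structurally parallel to the unattenuated tensor tomography case.
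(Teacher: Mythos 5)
Your overall strategy (interior analytic regularity, boundary vanishing, analytic continuation) matches the paper's in outline, but the route you choose for the crucial interior step is different from the paper's, and the step you yourself flag as ``the main obstacle'' is exactly the step you do not carry out. The paper never proves, nor needs, that $N_\A$ is an analytic $\Psi$DO. Instead it works directly with the vanishing of $I_\A h$ on open families of geodesics: near each covector $(x_0,\xi_0)$ it localizes with a sequence of cut-offs $\chi_N$ obeying $|\p^\beta\chi_N|\le (CN)^{|\beta|}$ for $|\beta|\le N$ (Tr\`eves), applies Sj\"ostrand's complex stationary phase to the resulting oscillatory integrals for $n$ directions $v_0,\dots,v_{n-1}$ conormal to $\xi_0$, adjoins one more equation obtained by integrating $\delta_\A h=0$ against the same phase, and checks that the resulting $(n+1)$-fold system of classical analytic symbols is elliptic at $(0,0,\xi_0)$ --- the last point using the invertibility of $\tilde W_\A(0,v_j)$ and the fact that $\{v_j-v_0\}$ spans $\xi_0^{\perp}$. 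This simultaneously avoids the analytic-amplitude estimates for $N_\A$ that you defer, and handles the gauge direction without invoking the solenoidal projection $\mathcal S_\A$, which involves cut-offs and a boundary value problem and is therefore not an analytic operation. Your assertion that the principal symbol of $N_\A$ is ``up to an elliptic analytic factor the same as in the unattenuated case'' is also not accurate: the symbol involves $\int_{S_xM_1}\tilde W_\A^*\tilde W_\A\,(\cdots)\,\delta(\xi\cdot v)\,dv$, and its injectivity on solenoidal pairs is a genuine statement requiring the invertibility of $W_\A$, not a zeroth-order conjugation of the unweighted symbol.

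The boundary step as you state it has a genuine gap. It is not the $\delta_\A$-solenoidal representative whose jets vanish at $\p M$, and the vanishing is not obtained by differentiating the scattering relation. The paper first changes gauge: it solves $\p_n\phi+A_n\phi=\alpha_n$, $\phi|_{x^n=0}=0$, in boundary normal coordinates so that $\tilde h=h-d_\A\phi$ satisfies $\tilde\alpha_n=0$, and then shows $N_\A$ is elliptic on pairs with $\alpha_n=0$ in directions conormal to $\p M$; since $N_\A\tilde h=0$ and $\tilde h$ vanishes outside $M$, this forces $\p_n^k\tilde h|_{\p M}=0$ for all $k$. Consequently what one obtains near $\p M$ is $h=d_\A\phi$, not $h=0$, so your final ``open and closed vanishing set'' argument does not apply to $h^s$ directly: one must run the analytic continuation argument of \cite{SU08} that propagates the potential inward along a foliation to produce a global $\phi_0$ with $\phi_0|_{\p M}=0$ and $h=d_\A\phi_0$, and only then conclude $h^s=0$. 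If you wish to keep your $N_\A$-based route, you would still need to (i) establish the analytic pseudodifferential calculus for $N_\A$ together with an analytic parametrix for the full system including the gauge equation, and (ii) replace your boundary and continuation steps by the gauge normalization and foliation arguments above.
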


\begin{rem*}
For the sake of simplicity, we carry out all the arguments with the original complex-valued $\A$ and $W_\A$. Indeed one can reduce everything to real-valued objects and consider an equivalent problem in the real category, see Appendix \ref{complex to real}. 
\end{rem*}

We also remark that there are related studies in the analytic category of weighted X-ray transforms in \cite{FSU08,HS10,Ho13}, they either only consider the function case or impose extra conditions on the 1-forms which make the kernel of the ray transform trivial and the arguments simpler too. 

Similar to Theorem \ref{nonlinear thm}, we also get generic s-injectivity and stability estimates for $I_\A$ by investigating some normal operator involving $I_\A$ through microlocal analysis. The method goes back to the study of the stability estimates of the geodesic ray transform of tensor fields by Stefanov and Uhlmann \cite{SU04,SU05}.  
To state the results, we need to make extensions of the manifold $(M,g)$ and $\A$. Let $M_1$ be a slightly larger compact manifold with boundary so that $M\Subset M_1^o$, where $M_1^o$ is the interior of $M_1$. In particular, one can consider $M_1$ as $M\cup(\p M\times [-\varepsilon,0))$ with $\p M\times [-\varepsilon,0)$ a thin annulus around $M$ for $0<\varepsilon\ll 1$. We also extend $g$ and $\A$ continuously (e.g. under H\"older norms) onto $M_1$ so that $M_1$ is simple too. We can keep $M_1$ being equipped with real-analytic atlas too, and the extended $g$ and $\A$ are real-analytic if the original ones are real-analytic.

Let $\tilde U_\A$ be the fundamental matrix on $SM_1$
$$X\tilde U_\A+\A \tilde U_\A=0,\quad \tilde U_\A|_{\p_+SM_1}=\id.$$
Similarly we denote the inverse of $\tilde U_\A$ by $\tilde W_\A$. We extend the pair $[\alpha,f]$ by zero onto $M_1$ and consider the new ray transform associated with the extended system
$$\tilde I_\A [\alpha,f](x',v')=\int \tilde W_\A(\gamma_{x',v'},\dot\gamma_{x',v'})\Big(\alpha_{\gamma_{x',v'}}(\dot\gamma_{x',v'})+f(\gamma_{x',v'})\Big)\,dt$$
for $(x',v')\in \p_+SM_1$. Given $(x,v)\in \p_+SM$ there exist $t>0$ and $(x',v')\in \p_+SM_1$ such that $(x,v)=(\gamma_{x',v'}(t),\dgamma_{x',v'}(t))$, generally $I_\A [\alpha,f](x,v)\neq \tilde I_\A [\alpha,f](x',v')$. However we will show in Section 2 that one can manipulate the difference, thus knowing $I_\A [\alpha,f]$ is equivalent to knowing $\tilde I_\A [\alpha,f]$. 

Let $\tilde I^*_\A$ be the adjoint of $\tilde I_\A$ under the $L^2$ inner product, we define the normal operator on $M_1$
$$N_\A:=\tilde I^*_\A\, \tilde I_\A.$$
We denote $[\alpha,f]$ by $h$, there exists a unique orthogonal decomposition (w.r.t. $L^2$ inner product) of $h$ on $M$ 
$$h=h^s_M+d_\A p,$$
where $p\in C^{\infty}(M,\mathbb C^k)$, $p|_{\p M}=0$ and $\delta_\A h^s_M=0$ on $M$ (we use $h$ to denote both a function on $M$ and its extension by zero on $M_1$). Here $\delta_\A$ is the adjoint of $d_\A$ under the $L^2$ inner product. See Section 2 and Appendix \ref{decomposition} for more details.

\begin{thm}\label{linear stability}
Let $(M,g)$ be a simple manifold and $\A$ be a pair $[A,\Phi]$, assume that $I_\A$ is s-injective,\\
(1) let $h=[\alpha,f]$, then the following stability estimate for $N_\A$ holds
$$\|h^s_M\|_{\bfL(M)}\leq C \|N_\A h\|_{\bfH^1(M_1)};$$
(2) there exists $0<\epsilon\ll 1$ such that the estimate in (1) remains true if $g$ and $\A$ are replaced by $\tilde g$ and $\tilde A$ satisfying $\|\tilde g-g\|_{C^4(M_1)}\leq \epsilon$, $\|\tilde \A-\A\|_{C^3(M_1)}\leq \epsilon$. The constant $C>0$ can be chosen uniformly, only depending on $g,\,\A$.
\end{thm}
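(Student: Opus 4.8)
The plan is to follow the now-standard microlocal strategy for stability estimates of (attenuated) ray transforms, adapting the arguments of Stefanov--Uhlmann \cite{SU04,SU05} to the presence of the matrix attenuation $\A$. The first step is to analyze the normal operator $N_\A=\tilde I_\A^*\tilde I_\A$ on the enlarged manifold $M_1$. Writing out $N_\A$ as an integral over pairs of geodesic parameters and changing variables to $(x,\xi)$ via the exponential map (using simplicity, so that $\exp_x$ is a diffeomorphism), one sees that $N_\A$ is a classical pseudodifferential operator of order $-1$ on $M_1^o$, acting on the pairs $h=[\alpha,f]$; its full symbol now carries the extra factor $\tilde W_\A(\gamma,\dot\gamma)$ along the geodesics, but because $\tilde W_\A|_{\p_+SM_1}=\id$ the leading symbol at a point $x$ in the interior is, up to a positive smooth factor, the same averaging over the sphere of directions as in the unattenuated case, tensored with $\Id_{\C^k}$. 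Hence $N_\A$ is elliptic on the ``solenoidal'' part: more precisely, on the kernel of the principal symbol of $d_\A$ (equivalently transverse to the symbol of the gauge directions), the principal symbol of $N_\A$ is an isomorphism. This ellipticity is insensitive to small $C^k$ perturbations of $g$ and $\A$, which is what will eventually give the uniformity in part (2).

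The second step is to upgrade ellipticity to a global estimate. Following \cite{SU04}, one constructs a parametrix: a pseudodifferential operator $P$ of order $1$ on $M_1$ such that $PN_\A = \Id + K_1$ modulo smoothing, after first passing to the solenoidal gauge. Concretely, one performs the decomposition $h=h^s_M+d_\A p$ on $M$ (Appendix \ref{decomposition} of the paper), extends $h^s_M$ suitably, and uses that $\tilde I_\A$ annihilates $d_\A p$-type terms with vanishing boundary trace, so that $\tilde I_\A h$ only sees $h^s_M$ up to controllable boundary contributions. Applying $P$ and using the mapping properties $\tilde I_\A: \bfL(M)\to H^1(\p_+SM_1)$ (so $N_\A: \bfL \to \bfH^1$), one obtains
\[
\|h^s_M\|_{\bfL(M)} \le C\|N_\A h\|_{\bfH^1(M_1)} + C\|K_1 h^s_M\|_{\bfL(M_1)}
\]
with $K_1$ compact (indeed smoothing, hence compact $\bfL\to\bfL$ on the precompact $M_1$). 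The s-injectivity hypothesis on $I_\A$ then removes the compact error by the usual functional-analytic argument: if the estimate failed there would be a sequence $h_j$ with $\|(h_j)^s_M\|=1$, $\|N_\A h_j\|_{\bfH^1}\to 0$; by compactness a subsequence of $(h_j)^s_M$ converges in $\bfL$ to some $h^s$ with $\|h^s\|=1$ and $N_\A h^s=0$, forcing $\tilde I_\A h^s=0$ and then $I_\A h^s=0$, so by s-injectivity $h^s$ is a pure gauge term $d_\A q$ with $q|_{\p M}=0$; but $h^s$ is also solenoidal ($\delta_\A h^s=0$), and the decomposition is unique, so $h^s=0$, a contradiction. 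This proves part (1).

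For part (2), the point is that every ingredient above is stable under small perturbations. The principal symbol of $N_{\tilde\A}$ depends continuously on $(\tilde g,\tilde\A)$ in the stated norms, so ellipticity on the solenoidal part persists uniformly; the parametrix $P$ and the smoothing remainder can be chosen to depend continuously (as bounded operators between the relevant Sobolev spaces) on the data, again uniformly in a $C^4\times C^3$ neighborhood; and the $\bfL\to\bfH^1$ bounds for $\tilde I_{\tilde\A}$ and $N_{\tilde\A}$ are uniform. The only genuinely nontrivial issue is the removal of the compact error: one needs the conclusion $N_{\tilde\A}h^s=0 \Rightarrow h^s=0$ to hold for all nearby $(\tilde g,\tilde\A)$, not just for the base point. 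This follows from a semicontinuity argument: if it failed there would be $(\tilde g_j,\tilde\A_j)\to(g,\A)$ and $h_j$ with $\|(h_j)^s\|=1$, $N_{\tilde\A_j}h_j\to 0$; extracting a convergent subsequence (using the uniform parametrix estimate to get $\bfL$-precompactness of $(h_j)^s$), the limit violates s-injectivity for $(g,\A)$ itself, which is assumed. I expect this perturbation/compactness bookkeeping — keeping all constants uniform while carrying the matrix weight $\tilde W_\A$ and the $\A$-dependent decomposition through the parametrix construction — to be the main obstacle; the symbolic computation for $N_\A$ and the basic ellipticity are essentially the same as in the scalar unattenuated case, since $\tilde W_\A$ contributes only lower-order and boundary terms.
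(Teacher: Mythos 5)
Your part (1) follows the paper's own route (parametrix for a normal operator elliptic on solenoidal pairs, then removal of a compact error via s-injectivity and the standard functional-analytic lemma), but two steps are justified incorrectly or skipped. First, the ellipticity. The principal symbol of $N_\A$ at an interior point $(x,\xi)$ carries the weight $\tilde W_\A^*(x,v)\tilde W_\A(x,v)$ integrated over $v\in S_xM_1\cap\xi^\perp$; the normalization $\tilde W_\A|_{\p_+SM_1}=\id$ is irrelevant at interior points, and the symbol is \emph{not} a positive scalar factor times the unattenuated symbol tensored with $\Id_{\C^{k}}$ --- the weight is matrix-valued and depends on $v$. Ellipticity on solenoidal pairs holds for a different reason: pointwise invertibility of $\tilde W_\A(x,v)$, so that vanishing of the quadratic form $(\sigma_p(N_\A)h,h)$ forces $\alpha(v)+f=0$ for all $v\perp\xi$, and choosing $n$ such directions whose differences span $\xi^\perp$, together with $\alpha(\xi^*)=0$, gives $\alpha=f=0$. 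Second, and more substantively: the parametrix naturally controls the solenoidal part \emph{relative to $M_1$}, i.e.\ $h^s_{M_1}$, not $h^s_M$; the zero extension of $h^s_M$ is in general not solenoidal on $M_1$, so it cannot be fed into the elliptic estimate. The paper bridges the two via $h^s_M=h^s_{M_1}+d_\A(\phi_{M_1}-\phi_M)$, bounds this difference by $\|\phi_{M_1}\|_{H^{1/2}(\p M)}$ through an elliptic Dirichlet problem, and then estimates that trace by integrating $d_\A\phi_{M_1}=-h^s_{M_1}=-PN_\A h+K_3h$ along geodesics normal to the boundary in the collar $M'\setminus M$ where $h=0$. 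This step is entirely hidden in your phrase ``controllable boundary contributions'' and is not automatic.

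For part (2) you take a genuinely different route: a compactness/contradiction argument along a sequence $(\tilde g_j,\tilde\A_j)\to(g,\A)$, versus the paper's direct absorption argument based on two quantitative lemmas, $\|\mathcal S_{\tilde\A,\tilde g}-\mathcal S_{\A,g}\|_{\bfL\to\bfL}\le C\epsilon$ and $\|(N_{g,\A}-N_{\tilde g,\tilde \A})h\|_{\bfH^1(M_1)}\le C\epsilon\|h\|_{\bfL(M)}$, which yield $\|h^s_{M,\tilde g,\tilde\A}\|\le C_0\|N_{\tilde g,\tilde\A}h\|+C'\epsilon\|h^s_{M,\tilde g,\tilde\A}\|$ and allow absorption for small $\epsilon$, with the explicit uniform constant $2C_0$. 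Your route can be made to work, but as written it has a gap: ``$\bfL$-precompactness of $(h_j)^s$'' is unjustified (unit vectors in $\bfL$ are not precompact; one only has compactness of the embedding into $\bfH^{-1}(M_1)$, and the Cauchy property must be extracted from the a priori estimate applied to differences, which now involves different normal operators and different solenoidal projections for each $j$). Closing that requires exactly the two perturbation lemmas above, so the ``bookkeeping'' you defer is in fact the mathematical content of part (2).
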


It is easy to see that Theorem \ref{analytic s-injective} and \ref{linear stability} together imply that the s-injectivity of $I_\A$ and the stability estimates hold for generic simple metrics and generic connections and Higgs fields.

\begin{rem}\label{microlocal condition}
Several results of this paper, especially the results of the linear problem, will still hold on a compact manifold satisfying some microlocal condition which essentially says that the union of the conormal bundles of nontrapped geodesics that are free of conjugate points covers the cotangent bundle $T^*M$. This condition allows the existence of trapped geodesics and conjugate points, so one only has access to partial data, and the boundary is not necessarily convex, see \cite{SU08, FSU08} and Remark \ref{generalize 1}, \ref{generalize 2}, \ref{generalize 3} for more details.
\end{rem}

\begin{rem}
We just consider the inverse problem on ordinary geodesics in this paper, however the results can be generalized to general smooth curves, even with nonconstant speed, see previous studies \cite{FSU08, HS10} and the local problem \cite[Appendix]{UV16}.
\end{rem}

\begin{rem}
The arguments of this paper also work for X-ray transforms of vector-valued functions with smooth invertible matrix weights, see Section \ref{function case} for the statements of the results. The scalar case was considered in \cite{FSU08}, and a version for Radon transforms was studied in \cite{HZ16}. Investigations of some related local problems in dimension $\geq 3$ can be found in e.g. \cite{SUV16,Zh16,PSUZ16}. 
\end{rem}

The paper is organized as follows. Section 2 discusses the necessary properties of $I_\A$ for carrying out the arguments of the paper. We prove Theorem \ref{linear stability} in Section 3 and Theorem \ref{analytic s-injective} in Section 4. In Section 5, we discuss the analogous results for weighted ray transforms of functions on $M$, there is no natural gauge in this case. The proof of Theorem \ref{nonlinear thm} is given in Section 6. There are two appendices at the end: Appendix \ref{complex to real} shows that one can reduce everything from complex to real; Appendix \ref{decomposition} establishes an orthogonal decomposition of pairs of functions and 1-forms with respect to $\A$.

\bigskip

\noindent {\bf Acknowledgements.} 
The author thanks Prof. Gunther Uhlmann for suggesting this problem and useful comments. He is also grateful to Prof. Gabriel P. Paternain for very helpful discussions and suggestions on the paper. The research was supported by EPSRC grant EP/M023842/1.



\section{Preliminaries}


Consider $h=[\alpha,f]$ as an element of the space $\bfH^k(M)$, $k\geq 0$, with the norm 
$$\|h\|^2_{\bfH^k(M)}:=\|\alpha\|^2_{H^k(M)}+\|f\|^2_{H^k(M)}.$$
By Theorem \ref{decomposition thm}, there is a unique orthogonal decomposition of $h$ with the form 
$$h=h^s+d_\A p$$
for some $h^s\in \bfH^k(M)$ and $p\in H^{k+1}(M)$ with $p|_{\p M}=0$. Recall that $d_\A p=[dp+Ap,\Phi p]$, let $\delta_\A$ be the adjoint of $d_\A$ under the $L^2$ inner product, then $\delta_\A h^s=0$. Note that $\delta_\A[\alpha,f]=\delta\alpha+A^*(\alpha)+\Phi^*f$. We call $h^s$ and $d_\A p$ the {\it solenoidal} and {\it potential} part of $h$ respectively.

Denote $\Delta_\A=\delta_\A d_\A$, it is easy to see that $\Delta_\A$ is an elliptic operator. One can check that $p$ solves the following regular elliptic Dirichlet boundary value problem
$$\Delta_\A p=\delta_\A h,\quad p|_{\p M}=0.$$
We denote the solution operator, which is the Dirichlet realization of $\Delta_\A$ on $M$, by $\Delta_\A^D$, thus $p=(\Delta^D_\A)^{-1}\delta_\A h$. Define two projections
$$\mathcal P_\A:=d_\A(\Delta^D_\A)^{-1}\delta_\A,\quad \mathcal S_\A=Id-\mathcal P_\A,$$
then $h^s=\mathcal S_\A h$. One can check that $N_\A\mathcal S_\A=\mathcal S_\A N_\A=N_\A$ and $N_\A \mathcal P_\A=\mathcal P_\A N_\A=0$. If we denote $\mathcal S_\A \bfH^k(M)$ and $\mathcal P_\A \bfH^k(M)$ the subspaces of solenoidal and potential pairs (w.r.t. $\A$) of $\bfH^k(M)$ respectively, then obviously 
$$\mathcal S_\A:\bfH^k(M)\to \mathcal S_\A \bfH^k(M),\quad \mathcal P_\A:\bfH^k(M)\to \mathcal P_\A \bfH^k(M)$$
are bounded. Moreover, $\mathcal S_\A$ and $\mathcal P_\A$ continuously depend on $g$ and $\A$.

\begin{lemma}\label{projection perturbation}
Given $(g,\A)\in C^1(M)$, there exists $\epsilon>0$ small such that for any $(\tilde g,\tilde \A)$ with $\|(\tilde g,\tilde \A)-(g,\A)\|_{C^1(M)}\leq \epsilon$
\begin{align*}
\|\mathcal S_{\tilde \A,\tilde g}-\mathcal S_{\A,g}\|_{\bfL \to \bfL}\leq C\epsilon,\quad \|\mathcal P_{\tilde \A,\tilde g}-\mathcal P_{\A,g}\|_{\bfL \to \bfL}\leq C\epsilon
\end{align*}
with $C>0$ a locally uniform constant depending on $g$ and $\A$ only. 
\end{lemma}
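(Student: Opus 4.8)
The plan is to reduce the perturbation estimate for $\mathcal S_\A$ and $\mathcal P_\A$ to a perturbation estimate for the solution operator $(\Delta^D_\A)^{-1}$, using the explicit formula $\mathcal P_\A = d_\A (\Delta^D_\A)^{-1} \delta_\A$ and $\mathcal S_\A = \Id - \mathcal P_\A$. First I would fix $(g,\A)\in C^1(M)$ and note that, since the $C^1$ norm of $(\tilde g,\tilde\A)$ controls the coefficients of the first-order operators $d_{\tilde\A}, \delta_{\tilde\A}$ (recall $d_\A p = [dp+Ap,\Phi p]$ and $\delta_\A[\alpha,f] = \delta\alpha + A^*\alpha + \Phi^* f$, where $\delta$ depends on $g$ through the volume form and metric contractions), we have $\|d_{\tilde\A} - d_\A\|_{H^1 \to \bfL} \leq C\epsilon$ and $\|\delta_{\tilde\A} - \delta_\A\|_{\bfH^1 \to L^2} \leq C\epsilon$, with $C$ locally uniform. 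The operators $d_\A : H^1 \to \bfL$ and $\delta_\A : \bfH^1 \to L^2$ are bounded uniformly for $(g,\A)$ in a small $C^1$-ball.

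The core step is to show that the Dirichlet solution operator is stable: $\|(\Delta^D_{\tilde\A,\tilde g})^{-1} - (\Delta^D_{\A,g})^{-1}\|_{H^{-1} \to H^1} \leq C\epsilon$. Here I would use the standard resolvent identity
\[
(\Delta^D_{\tilde\A})^{-1} - (\Delta^D_\A)^{-1} = (\Delta^D_{\tilde\A})^{-1}\bigl(\Delta_\A - \Delta_{\tilde\A}\bigr)(\Delta^D_\A)^{-1},
\]
valid on the appropriate spaces since both operators solve the regular elliptic Dirichlet problem with zero boundary data. The middle factor $\Delta_\A - \Delta_{\tilde\A} = \delta_\A d_\A - \delta_{\tilde\A} d_{\tilde\A}$ is a second-order operator whose coefficients differ by $O(\epsilon)$ in $C^0$ (after the principal-part cancellation, the difference is genuinely lower order plus an $O(\epsilon)$ second-order term), so it maps $H^1 \to H^{-1}$ with norm $\leq C\epsilon$. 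Combined with the uniform bounds $\|(\Delta^D_\A)^{-1}\|_{H^{-1}\to H^1} \leq C$ (which hold locally uniformly in $(g,\A)$ by standard elliptic theory for the regular Dirichlet problem, since the lowest-order behavior is controlled and the problem stays coercive for small perturbations) and the analogous bound for $\tilde\A$, this gives the claimed resolvent estimate. One subtlety: one must check $\Delta^D_{\tilde\A}$ remains invertible, i.e. coercivity persists under small $C^1$ perturbation; this follows because $\Delta_\A$ is elliptic with no kernel in $H^1_0$ (a nonzero $p\in H^1_0$ with $\Delta_\A p = 0$ would force $d_\A p = 0$, contradicting $p|_{\p M}=0$ by the decomposition theorem), and the Fredholm index and injectivity are stable under small perturbations.

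Finally I would combine: writing $\mathcal P_{\tilde\A} - \mathcal P_\A$ as a telescoping sum
\[
d_{\tilde\A}(\Delta^D_{\tilde\A})^{-1}\delta_{\tilde\A} - d_\A(\Delta^D_\A)^{-1}\delta_\A
= (d_{\tilde\A}-d_\A)(\Delta^D_{\tilde\A})^{-1}\delta_{\tilde\A}
+ d_\A\bigl((\Delta^D_{\tilde\A})^{-1}-(\Delta^D_\A)^{-1}\bigr)\delta_{\tilde\A}
+ d_\A(\Delta^D_\A)^{-1}(\delta_{\tilde\A}-\delta_\A),
\]
each term is bounded by $C\epsilon$ as an operator $\bfL \to \bfL$: in the first and third terms the $O(\epsilon)$ factor is paired with uniformly bounded operators (using that $(\Delta^D)^{-1}\delta : \bfL \to H^1$ and $d : H^1 \to \bfL$ are uniformly bounded, and $\delta : \bfL \to H^{-1}$ with $(\Delta^D)^{-1} : H^{-1} \to H^1$), and in the middle term the $O(\epsilon)$ resolvent estimate is sandwiched between uniformly bounded $\delta$ and $d$. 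The estimate for $\mathcal S_\A = \Id - \mathcal P_\A$ is then immediate. The main obstacle is the bookkeeping in the resolvent estimate — specifically verifying that the second-order part of $\Delta_\A - \Delta_{\tilde\A}$ really is $O(\epsilon)$ after the principal symbols cancel, and that coercivity/invertibility of $\Delta^D_{\tilde\A}$ survives the perturbation so that the resolvent identity is legitimate — rather than anything conceptually deep.
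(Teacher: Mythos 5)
Your proposal is correct and follows essentially the same route as the paper, which does not write out a proof but simply cites \cite[Lemma 1]{SU05}; that argument is exactly the one you give, namely writing $\mathcal P_\A = d_\A(\Delta^D_\A)^{-1}\delta_\A$, establishing an $O(\epsilon)$ perturbation bound for the Dirichlet solution operator via the resolvent identity (with invertibility preserved because $\Delta^D_\A$ has trivial kernel and invertibility is an open condition), and telescoping. The only points worth keeping in mind when writing it up are the divergence-form bookkeeping you already flag (so that $\delta_{\tilde\A}-\delta_\A$ is $O(\epsilon)$ as a map $\bfL\to H^{-1}$ and not merely $\bfH^1\to L^2$) and the harmless fact that the $\bfL$ inner product itself depends on the metric, which changes norms only by a factor $1+O(\epsilon)$.
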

A proof in the absence of $\A$ can be found in \cite[Lemma 1]{SU05}, similar arguments will work for the case with $\A$.



Given $h=[\alpha,f]$ on $M$, we can extend it by zero onto $M_1$, still denoted by $h$. We want to compare $I_\A h$ and $\tilde I_\A h$. Notice that on $M$
$$X(\tilde W_\A W^{-1}_\A)=\tilde W_\A \A W^{-1}_\A-\tilde W_\A \A W^{-1}_\A=0,$$
i.e. $\tilde W_\A W^{-1}_\A$ is constant along geodesics on $M$.
Given $\gamma$ a geodesic on $M_1$ connection boundary points of $\p M_1$, assume that $\gamma\cap M\neq \emptyset$ 
\begin{align*}
\tilde I_\A h(\gamma)&=\int_{\gamma} \tilde W_\A(\gamma,\dgamma) h(\gamma,\dgamma)\,dt=\int_{\gamma\cap M} \tilde W_\A(\gamma,\dgamma) h(\gamma,\dgamma)\,dt\\
& =\int_{\gamma\cap M}\tilde W_\A(\gamma,\dgamma)W^{-1}_\A(\gamma,\dgamma)W_\A(\gamma,\dgamma)h(\gamma,\dgamma)\,dt\\
& =C(\gamma)\int_{\gamma\cap M} W_\A(\gamma,\dgamma)h(\gamma,\dgamma)\,dt\\
& =C(\gamma)I_\A h (\gamma),
\end{align*}
where $C(\gamma)$ is some constant that depends on $\gamma$ and is known if $g$ and $\A$ are given. Thus once $I_\A h$ is given, we know the values of $\tilde I_\A h$ and vice versa. From now on, we use $I_\A$ to represent both ray transforms.

Since $C^\infty(M_1)$ is dense in $\bfL(M_1)$, it is easy to check that $I_\A:\bfL(M_1)\to L^2_\mu(\p_+SM_1)$ is bounded, here $L^2_\mu(\p_+SM_1)$ is the $L^2$ space on $\p_+SM_1$ under the measure $d\mu=\<v,\nu\>_g\,d\Sigma^{2n-2}$ with $d\Sigma^{2n-2}$ the standard measure on $\p SM_1$. So the adjoint $I^*_\A: L^2_\mu(\p_+SM_1)\to \bfL(M_1)$ is bounded too. By a simple calculation, one can show that the integral expression of $I^*_\A$ is
$$I^*_\A u (x)=\begin{pmatrix} (\int_{S_xM_1} g_{i\ell}(x)v^\ell\tilde W^*_\A(x,v)u^\sharp(x,v)\,dv)\,dx^i \\[.5em] \int_{S_xM_1} \tilde W^*_\A(x,v)u^\sharp(x,v)\,dv \end{pmatrix},$$
where $W^*_\A$ is the conjugate transpose of $W_\A$, $u^\sharp$ is the invariant extension of $u$ along geodesics, i.e. $u^\sharp(x,v)=u(\gamma_{x,v}(\tau_-(x,v)),\dgamma_{x,v}(\tau_-(x,v)))$ with $\tau_-(x,v)$ the negative exit time of $\gamma_{x,v}$ from $\p M_1$.  

\begin{rem}\label{boundedness of I and I*}
One can also consider the boundedness of $I_\A$ and $I^*_\A$ on $H^k$ spaces for $k\geq 0$. Notice that we only consider $h=[\alpha,f]$ with $\supp h\subset M$, and $\tilde W_\A$ is smooth in $SM^o_1$ (so is $\tilde W^*_\A$), it is not difficult to check that 
\begin{align*}
I_\A: \bfH^k_c(M_1^o)\to H^k_c((\p_+SM_1)^o)\quad \mbox{and} \quad I^*_\A: H^k_c((\p_+SM_1)^o)\to \bfH^k(M_1)
\end{align*}
are bounded on simple $M_1$ for $k\geq 0$. See e.g. \cite[Theorem 4.2.1]{Sh94} and \cite[Proposition 5.3]{PZ15} for the ordinary geodesic ray transform.
\end{rem}


\section{Stability estimates}

We will study the microlocal properties of the operator $N_\A$ and prove Theorem \ref{linear stability} in this section.

\subsection{Ellipticity of $N_\A$}

\begin{lemma}\label{elliptic PsiDO}
$N_{\A}$ is a $\Psi$DO of order $-1$ in $M_1^o$. It is elliptic on solenoidal pairs at  any $(x,\xi)\in T^*M_1^o\setminus 0$.
\end{lemma}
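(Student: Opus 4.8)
The plan is to run the Stefanov--Uhlmann argument --- represent the Schwartz kernel of $N_\A=\tilde I^*_\A\tilde I_\A$ in geodesic polar coordinates, read off its singularity at the diagonal, and then compute and invert the principal symbol on solenoidal pairs. Concretely, I would first rewrite $N_\A$ using the integral formulas for $\tilde I_\A$ and $\tilde I^*_\A$ recorded above. Writing $h=[\alpha,f]$ and using $X\tilde W_\A=\tilde W_\A\A$, one checks that along any geodesic $\tilde W_\A(\gamma_{x,v}(s),\dot\gamma_{x,v}(s))=\tilde W_\A(x,v)\,Q_{x,v}(s)$, where $Q_{x,v}$ is the smooth $GL(k,\C)$-valued solution of $\dot Q=Q\,\A$ with $Q_{x,v}(0)=\id$, depending only on the geodesic and on $s$. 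Inserting this identity, $\tilde I^*_\A\tilde I_\A h$ becomes an iterated integral over $S_xM_1\times\R$ whose dependence on $\tilde W_\A$ enters only through the smooth, Hermitian positive-definite weight $G(x,v):=\tilde W^*_\A(x,v)\tilde W_\A(x,v)$ and the propagator $Q$. Changing variables from $(v,s)\in S_xM_1\times\R$ to $y=\exp_x(sv)\in M_1$ turns this into $N_\A h(x)=\int_{M_1}\mathcal K_\A(x,y)h(y)\,dy$, with $\mathcal K_\A(x,y)=d_g(x,y)^{-(n-1)}$ times a matrix-valued function that is smooth in $(x,y)$ and assembled from $G$, $Q$ and the Jacobian of the polar change of variables along the unique geodesic from $x$ to $y$. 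Simplicity of $M_1$ is exactly what is needed here: it makes this geodesic unique and smooth in $(x,y)$, $d_g(x,y)^2$ smooth, and the Jacobian smooth and bounded away from $0$; hence $\mathcal K_\A$ has precisely the conormal singularity at the diagonal of the Schwartz kernel of a classical $\Psi$DO of order $-1$ on $M_1^o$, and it is smooth off the diagonal since $\tilde W_\A$ is smooth on $SM_1^o$.

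Next I would compute the principal symbol. Localizing near the diagonal only the contribution of $s\to0$ survives, where $Q_{x,v}(s)\to\id$ and $\dot\gamma_{x,v}(s)\to v$, and the standard computation gives, at $(x,\xi)\in T^*M_1^o\setminus0$,
$$\sigma_{-1}(N_\A)(x,\xi)[\alpha,f]=c_n\,|\xi|_g^{-1}\int_{S_xM_1\cap\xi^{\perp}}\begin{pmatrix}v^{\flat}\otimes G(x,v)(\alpha(v)+f)\\[.25em] G(x,v)(\alpha(v)+f)\end{pmatrix}dS(v),$$
where $\alpha(v)$ is the contraction of the $1$-form $\alpha$ with $v$, $v^{\flat}\otimes w=(g_{i\ell}v^{\ell}w)\,dx^i$, and $dS$ is the measure on the equatorial sphere $S_xM_1\cap\xi^{\perp}$. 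Taking the Hermitian inner product of pairs with $[\alpha,f]$ collapses the two slots into a single square,
$$\big\langle\sigma_{-1}(N_\A)(x,\xi)[\alpha,f],[\alpha,f]\big\rangle=c_n\,|\xi|_g^{-1}\int_{S_xM_1\cap\xi^{\perp}}\big\langle G(x,v)(\alpha(v)+f),\,\alpha(v)+f\big\rangle\,dS(v)\ \ge\ 0,$$
which vanishes iff $\alpha(v)+f=0$ for every $v\in S_xM_1$ with $\langle v,\xi\rangle_g=0$. Replacing $v$ by $-v$ forces $f=0$ and then $\alpha(v)=0$ on the hyperplane $\xi^{\perp}$; combined with the solenoidal condition, which at the symbol level amounts to $\alpha(\xi^{\sharp})=0$, and the fact that $\xi^{\sharp}$ together with $\xi^{\perp}$ spans $T_xM_1$, this yields $\alpha=0$. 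Hence $\sigma_{-1}(N_\A)(x,\xi)$ is a non-negative Hermitian endomorphism of pairs that is injective on the subspace of solenoidal pairs; since $N_\A\mathcal S_\A=\mathcal S_\A N_\A=N_\A$ preserves that subspace, the symbol is positive definite there --- which is the asserted ellipticity on solenoidal pairs.

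The one genuinely delicate point is the first step: verifying that the polar change of variables converts the manifestly singular iterated integral into a bona fide classical amplitude in $S^{-1}$, uniformly on compact subsets of $M_1^o$, i.e. carefully bookkeeping the smoothness and non-degeneracy of all the geometric ingredients ($d_g^2$, the Jacobian, $y\mapsto\exp_x^{-1}(y)$, the propagator $Q$), all of which are supplied by simplicity of $M_1$. After that, the symbol computation and the short linear-algebra argument are routine and parallel the corresponding ones for the unattenuated geodesic ray transform in \cite{SU04,SU05}.
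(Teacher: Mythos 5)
Your proposal is correct and follows essentially the same route as the paper: the paper likewise cites the Stefanov--Uhlmann/DPSU polar-coordinate analysis to get a classical $\Psi$DO of order $-1$, writes the principal symbol as an integral of $\tilde W_\A^*(x,v)\tilde W_\A(x,v)$ over the equatorial sphere $\{v:\xi\cdot v=0\}$, tests it against $[\alpha,f]$, and uses invertibility of $\tilde W_\A$ plus the solenoidal condition $\xi^i\alpha_i=0$ to force $\alpha=f=0$. The only cosmetic difference is the final linear algebra: you exploit the antipodal symmetry $v\mapsto-v$ to kill $f$ first, while the paper takes differences $v_j-v_1$ of $n$ equatorial vectors spanning $\xi^\perp$; both are equally valid.
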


\begin{proof}
Notice that $N_{\A}$ is an operator acting on pairs, similar cases was considered before in \cite{DPSU07, HS10}. It is not difficult to check that the integral operator has the following form
$$N_{\A}\begin{pmatrix} \alpha \\ f\end{pmatrix}=\begin{pmatrix} N_{\A}^{11} & N_{\A}^{10} \\[.5em] N_{\A}^{01} & N_{\A}^{00} \end{pmatrix}\begin{pmatrix}\alpha \\ f \end{pmatrix},$$
where
\begin{align*}
(N_{\A}^{11}\alpha)_i(x)& =\int_{S_xM_1}\int g_{i\ell}(x)v^\ell \tilde W_\A^*(x,v)\tilde W_\A(\gamma_{x,v}(t),\dot\gamma_{x,v}(t))\alpha_j(\gamma_{x,v}(t))\dot\gamma_{x,v}^j(t)\,dtdv,\\
(N_{\A}^{10}f)_i(x)& =\int_{S_xM_1}\int g_{i\ell}(x)v^\ell \tilde W_\A^*(x,v)\tilde W_\A(\gamma_{x,v}(t),\dot\gamma_{x,v}(t))f(\gamma_{x,v}(t))\,dtdv,\\
N_{\A}^{01}\alpha(x)& =\int_{S_xM_1}\int \tilde W_\A^*(x,v)\tilde W_\A(\gamma_{x,v}(t),\dot\gamma_{x,v}(t))\alpha_j(\gamma_{x,v}(t))\dot\gamma_{x,v}^j(t)\,dtdv,\\
N_{\A}^{00}f(x)& =\int_{S_xM_1}\int \tilde W_\A^*(x,v)\tilde W_\A(\gamma_{x,v}(t),\dot\gamma_{x,v}(t))f(\gamma_{x,v}(t))\,dtdv.
\end{align*}
Then following \cite[Proposition 4.1]{DPSU07}, it is easy to see that $N_{\A}$ is a $\Psi$DO of order $-1$ in $M_1^o$. Moreover the principal symbol $\sigma_p(N_{\A})$ satisfies
\begin{align*}
\sigma_p(N_{\A}^{11})_i^j(x,\xi)& =\int_{S_xM_1} g_{i\ell}(x)v^\ell \tilde W_\A^*(x,v)\tilde W_\A(x,v)v^j\delta(\xi\cdot v)\,dv,\\
\sigma_p(N_{\A}^{10})_i(x,\xi)& =\int_{S_xM_1} g_{i\ell}(x)v^\ell \tilde W_\A^*(x,v)\tilde W_\A(x,v)\delta(\xi\cdot v)\,dv,\\
\sigma_p(N_{\A}^{01})^j(x,\xi)& =\int_{S_xM_1} \tilde W_\A^*(x,v)\tilde W_\A(x,v)v^j\delta(\xi\cdot v)\,dv,\\
\sigma_p(N_{\A}^{00})(x,\xi)& =\int_{S_xM_1} \tilde W_\A^*(x,v)\tilde W_\A(x,v)\delta(\xi\cdot v)\,dv.
\end{align*}

Now for $(x,\xi)\in T^*M_1^o\setminus 0$ given $[\alpha,f]$ in the kernel of $\sigma_p(\delta_\A)(x,\xi)$, i.e. $g^{ij}\alpha_i\xi_j=0$. Notice that at a fixed point $x$, we can assume that the geometry is trivial, i.e. $g_{ij}(x)=\delta_{ij}$, so we can identify $\xi=(\xi_i)$ with its dual $\xi^*=(\xi^i)=(g^{ij}\xi_j)$. Assume that $(\sigma_p(N_{\A})[\alpha,f],[\alpha,f])=0$, which implies that
$$0=\int_{S_xM_1}\Big|\tilde W_\A(x,v)(\alpha_i(x)v^i+f)\Big|^2\delta(\xi\cdot v)\,dv.$$
Thus $\tilde W_\A(x,v)(\alpha_i v^i+f)=0$ for $(x,v)\in S_xM_1\cap \xi^{\perp}$. Thus we can find $n$ vectors $v_1,\cdots, v_n$ from $S_xM_1\cap \xi^\perp$ such that $\{v_2-v_1, \cdots, v_n-v_1\}$ form a basis of $\xi^\perp$. Since $\tilde W_\A(x,v)$ is invertible, we get $\alpha_iv_j^i+f=0$ for $j=1,\cdots,n$, thus
$$\alpha_i(v_j-v_1)^i=0,\quad j=2,\cdots,n.$$
The fact that $\{v_2-v_1,\cdots,v_n-v_1\}$ is a basis for $\xi^\perp$, together with the assumption $\xi^i\alpha_i=0$, implies that $\alpha=0$. Therefore $f=0$ too, and this proves the lemma.
\end{proof}

\begin{rem}\label{generalize 1}
Lemma \ref{elliptic PsiDO} still holds under the microlocal condition mentioned in Remark \ref{microlocal condition}. In fact the microlocal condition implies that there exists a smooth cut-off function $\rho$ on $SM_1$ such that for any $(x,\xi)\in T^*M_1^o$ there is $v\in S_x M_1$ so that $\rho(x,v)\neq 0$. In the mean time, $\tilde W_\A$ is well-defined and smooth in an open neighborhood of $\supp \rho$, thus one can show that $N'_\A=I^*_\A\,\rho^2\, I_\A$ is an elliptic $\Psi$DO of order $-1$ acting on solenoidal pairs.
\end{rem}

\subsection{Stability up to an error}

By Lemma \ref{elliptic PsiDO}, $(N_{\A},D \delta_\A)$ form an elliptic system in $M_1^o$, here $D=\begin{pmatrix} \Lambda \\ 0 \end{pmatrix}$
with $\Lambda$ a properly supported parametrix of $\Delta_\A$ in $M_1^o$ with principal symbol $|\xi|^{-2}$. Thus there is a parametrix for the system in $M_1^o$, denoted by $(P,Q)$, such that
\begin{equation}\label{parametrix 1}
PN_{\A}+QD \delta_\A=Id+K,
\end{equation}
where $P,\,Q$ are $\Psi$DO's of order $1$, $K$ is a smoothing operator.

Let $M'$ be a compact extension of $M$ such that $M\Subset M'\Subset M_1$, in particular we can choose $M'=M\cup (\p M\times [-\varepsilon/2,0))$ if one recalls the definition of $M_1$ in the introduction. Let $\chi$ be a smooth cut-off function on $M_1$ with $\supp \chi\subset M_1^o$ and $\chi=1$ in a neighborhood of $M'$. Given a pair $h=[\alpha,f]$ with $\supp h\subset M$, by Appendix B, there is a unique decomposition $h=h^s_{M_1}+d_\A \phi_{M_1}$ on $M_1$. Since $\supp \delta_\A \chi h^s_{M_1}\subset M_1\setminus M'$, by the pseudolocal property of $\Psi$DO's, $Q D\delta_\A \chi h^s_{M_1}$ is smooth near $M'$, then by \eqref{parametrix 1} we have
\begin{equation}\label{parametrix 2}
P N_{\A}\chi h^s_{M_1}=h^s_{M_1}+K_1 h^s_{M_1}=h^s_{M_1}+K'_1 h\quad \mbox{in}\quad (M')^o
\end{equation}
with $K_1$, $K'_1$ both smoothing operators, and $K'_1=K_1\mathcal S_{M_1}$.

For the term on the left-hand side of \eqref{parametrix 2}, notice that $\chi h=h$ on $M_1$
\begin{equation}\label{parametrix 3}
P N_{\A}\chi h^s_{M_1}=P N_{\A} h-P N_{\A}\chi d_\A \phi_{M_1},
\end{equation}
we want to rewrite the second term on the right-hand side as some compact operator acting on $h$. For this purpose, using the fact that $\Delta_\A\phi_{M_1}=\delta_\A h$, so $\phi_{M_1}=(\Delta_\A^D)^{-1}_{M_1}\delta_\A h$. In the mean time, let $\Lambda'$ be a parametrix for $\Delta_\A$ on $M_1^o$ such that $\Lambda'\Delta_\A=Id-\tilde K$ for some smoothing operator $\tilde K$, then \begin{align*}
((\Delta_\A^D)^{-1}_{M_1}-\Lambda')\delta_\A h& =(\Delta_\A^D)^{-1}_{M_1}\delta_\A h-\Lambda'\delta_\A\mathcal P_{M_1}h\\
&=(\Delta_\A^D)^{-1}_{M_1}\delta_\A h-\Lambda'\Delta_\A (\Delta_\A^D)^{-1}_{M_1}\delta_\A h\\
&=\tilde K (\Delta_\A^D)^{-1}_{M_1}\delta_\A h=\tilde K'h \quad \mbox{in} \quad (M')^o
\end{align*}
for some $\tilde K'$ with the same property as $\tilde K$. Thus in $(M')^o$
\begin{align*}
P N_{\A}\chi d_\A \phi_{M_1}&=P N_{\A}\chi d_\A (\Lambda'\delta_\A h+\tilde K'h)\\
&=P N_{\A}d_\A \chi(\Lambda'\delta_\A h+\tilde K'h)-P N_{\A}(d \chi) (\Lambda'\delta_\A h+\tilde K'h)\\
&=-P N_{\A}(d \chi) (\Lambda'\delta_\A +\tilde K')h=K_2 h,
\end{align*}
where $K_2$ is a $\Psi$DO of order $-1$. Now by \eqref{parametrix 2} and \eqref{parametrix 3}
\begin{equation}\label{parametrix 4}
P N_{\A}h=h^s_{M_1}+K_3 h\quad \mbox{in} \quad (M')^o
\end{equation}
where $K_3$ is a new $\Psi$DO of order $-1$. Thus
\begin{equation}\label{estimate 1}
\|h^s_{M_1}\|_{\bfL(M')}\lesssim \|N_{\A}h\|_{\bfH^1(M_1)}+\|h\|_{\bfH^{-1}(M_1)}.
\end{equation}

Next we want to change the term on the left-hand side of \eqref{estimate 1} from the $L^2$ norm of $h^s_{M_1}$ to the $L^2$ norm of $h^s_M$. Notice that 
\begin{equation}\label{change}
h^s_M=h-d_\A \phi_M=h^s_{M_1}+d_\A (\phi_{M_1}-\phi_M),
\end{equation}
denote $u=\phi_{M_1}-\phi_M$, then $u$ satisfies the elliptic boundary value problem
\begin{equation*}
\Delta_\A u=0,\quad u|_{\p M}=\phi_{M_1}|_{\p M},
\end{equation*}
and the following estimate holds
\begin{equation}\label{estimate 2}
\|d_\A(\phi_{M_1}-\phi_M)\|_{\bfL(M)}\lesssim \|\phi_{M_1}-\phi_M\|_{H^1(M)} \lesssim \|\phi_{M_1}\|_{H^{1/2}(\p M)}.
\end{equation}

By \eqref{parametrix 4} and the fact that $\supp h\subset M$,
\begin{equation}\label{parametrix 5}
d_\A \phi_{M_1}=-h^s_{M_1}=-PN_{\A}h+K_3h \quad \mbox{in} \quad M'\setminus M.
\end{equation}
For $\varepsilon$ small enough, $M'\setminus M=\p M\times [-\varepsilon/2,0)$ is with in some semigeodesic neighborhood of $\p M'$, so that for any $x=(x',t)\in M'\setminus M$, there is a unique geodesic $\gamma_x:[0,t]\to M'\setminus M$ normal to $\p M'$ with $\gamma_x(0)=(x',0)$ and $\gamma_x(t)=x$. Thus by the fundamental theorem of calculus and the fact $X(\tilde W_\A\phi_{M_1})=\tilde W_\A d_\A \phi_{M_1}$ on $SM_1$, we get
\begin{equation*}
\tilde W_\A(x,\p_t)\phi_{M_1}(x)=\tilde W_\A((x',0),\p_t)\phi_{M_1}(x',0)+\int_0^t \tilde W_\A(-PN_{\A}h+K_3h)((x',s),\p_t)\,ds.
\end{equation*}
On the other hand, $\phi_{M_1}=(\Delta_\A^D)^{-1}_{M_1}\delta_\A h$, we define $\phi_{M_1}(x',0)=K_4h(x)$. Since $h=0$ outside $M$, $\phi_{M_1}$ is smooth near $\p M'$, this implies that $K_4$ is a smoothing operator in $M'\setminus M$. Therefore
\begin{equation}\label{integral eq}
\phi_{M_1}(x)=\tilde U_\A(x,\p_t)\Big(\tilde W_\A (K_4 h)(x)+\int_{\gamma_x}\tilde W_\A(-PN_{\A}h+K_3 h)\,ds\Big).
\end{equation}
By \eqref{integral eq}, \eqref{parametrix 5} and the trace theorem
\begin{equation}\label{estimate 3}
\|\phi_{M_1}\|_{H^{1/2}(\p M)}\lesssim \|\phi_{M_1}\|_{H^1(M'\setminus M)}\lesssim \|N_{\A}h\|_{\bfH^1(M_1)}+\|h\|_{\bfH^{-1}(M_1)}.
\end{equation}
Combine \eqref{estimate 1}, \eqref{change}, \eqref{estimate 2} and \eqref{estimate 3} we achieve the following estimate

\begin{lemma}\label{stability up to error}
For any $h\in \bfL(M)$
\begin{equation}\label{estimate 4}
\|h^s_{M}\|_{\bfL(M)}\lesssim \|N_{\A}h\|_{\bfH^1(M_1)}+\|h\|_{\bfH^{-1}(M_1)}.
\end{equation}
\end{lemma}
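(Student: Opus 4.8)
The plan is to assemble the estimates already established in this subsection; no new ideas are needed beyond what has been set up. Starting from the decomposition \eqref{change}, namely $h^s_M = h^s_{M_1} + d_\A(\phi_{M_1}-\phi_M)$, the triangle inequality gives
\[
\|h^s_M\|_{\bfL(M)} \le \|h^s_{M_1}\|_{\bfL(M)} + \|d_\A(\phi_{M_1}-\phi_M)\|_{\bfL(M)}.
\]
Since $M\Subset M'$, the first term on the right is at most $\|h^s_{M_1}\|_{\bfL(M')}$, which \eqref{estimate 1} bounds by $\|N_\A h\|_{\bfH^1(M_1)} + \|h\|_{\bfH^{-1}(M_1)}$.

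For the second term I would chain \eqref{estimate 2} with \eqref{estimate 3}: the former reduces $\|d_\A(\phi_{M_1}-\phi_M)\|_{\bfL(M)}$ to $\|\phi_{M_1}\|_{H^{1/2}(\p M)}$ through the elliptic boundary value problem satisfied by $u=\phi_{M_1}-\phi_M$, and the latter --- obtained from the integral representation \eqref{integral eq} together with \eqref{parametrix 5} and the trace theorem --- controls $\|\phi_{M_1}\|_{H^{1/2}(\p M)}$ by the same right-hand side $\|N_\A h\|_{\bfH^1(M_1)} + \|h\|_{\bfH^{-1}(M_1)}$. Adding the two bounds produces \eqref{estimate 4}.

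Since the substance lies entirely in the preparatory steps, I expect no genuine obstacle in this last step. Two points deserve a line of care. First, the manipulations with $\Psi$DO parametrices above were carried out for an a priori regular $h$; extending \eqref{estimate 4} to an arbitrary $h\in\bfL(M)$ follows from the density of $\CI(M)$ in $\bfL(M)$ together with the boundedness of $N_\A$, of $\mathcal{S}_\A$, and of the relevant parametrices on the spaces in play. Second, for later use in part (2) of Theorem \ref{linear stability} one wants the implicit constant to be locally uniform in $(g,\A)$; this is not needed for the statement of the lemma itself but follows from Lemma \ref{projection perturbation} and the smooth dependence on $(g,\A)$ of all the parametrices appearing in \eqref{parametrix 1}--\eqref{integral eq}.
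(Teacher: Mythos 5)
Your proposal is correct and matches the paper's own argument exactly: the lemma is obtained precisely by combining \eqref{change} with the triangle inequality, bounding the first term via \eqref{estimate 1} and the second via \eqref{estimate 2} chained with \eqref{estimate 3}. The additional remarks on density and uniformity of constants are consistent with how the paper uses the estimate later.
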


\medskip

\begin{rem}
By \eqref{estimate 4}, if $h\in \mathcal S_\A\bfL(M)\cap \Ker I_{\mathcal A}$, then $N_{\mathcal A}h=0$, so
$$\|h\|_{\bfL(M)}\lesssim \|h\|_{\bfH^{-1}(M_1)}.$$
Since the inclusion $\bfL(M)\hookrightarrow \bfH^{-1}(M_1)$ is compact, it is easy to see that this implies that the space $\mathcal S_\A\bfL(M)\cap \Ker I_{\mathcal A}$ has finite dimension. Moreover, by \eqref{parametrix 1} and the pseudolocal property, $h$ is smooth in the interior of $M$. Indeed one can show that $\mathcal S_\A\bfL(M)\cap \Ker I_{\mathcal A}$ is included in $C^\infty(M)$ \cite{SU04,SU05}. This implies that the s-injectivity on $\bfL(M)$ is equivalent to the s-injectivity on $C^\infty(M)$.
\end{rem}

\subsection{Generic stability}

\begin{proof}[Proof of Theorem \ref{linear stability} (1)]
To prove part (1) of Theorem \ref{linear stability} we need the functional analysis lemma below, see \cite[Prop. V.3.1]{Ta81}.
\begin{lemma}\label{functional lemma}
Let $X,\,Y$ and $Z$ be Banach spaces, $T:X\to Y$ be an injective bounded linear operator, and $K:X\to Z$ be a compact operator. If for any $x\in X$
$$\|x\|_X\lesssim \|Tx\|_Y+\|Kx\|_Z,$$
then the following improved estimate holds
$$\|x\|_X\lesssim \|Tx\|_Y.$$
\end{lemma}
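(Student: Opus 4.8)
The plan is to prove this by a standard contradiction-and-compactness argument, using injectivity of $T$ and compactness of $K$ in an essential way. First I would suppose the conclusion fails, so that no finite constant $C$ satisfies $\|x\|_X \le C\,\|Tx\|_Y$ for all $x$. Then for every $n$ there is some $x_n \in X$ with $\|x_n\|_X > n\,\|Tx_n\|_Y$; after rescaling we may assume $\|x_n\|_X = 1$, so that $\|Tx_n\|_Y \to 0$.

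Next I would extract a convergent subsequence to turn this merely bounded sequence into a genuinely convergent one. Since $K:X\to Z$ is compact and $\{x_n\}$ is bounded, passing to a subsequence (not relabeled) we may assume $Kx_n$ converges in $Z$; in particular $\{Kx_n\}$ is Cauchy. Applying the hypothesized estimate to the differences $x_n-x_m$ gives
\[
\|x_n-x_m\|_X \lesssim \|T(x_n-x_m)\|_Y + \|K(x_n-x_m)\|_Z \le \|Tx_n\|_Y + \|Tx_m\|_Y + \|Kx_n-Kx_m\|_Z ,
\]
and the right-hand side tends to $0$ as $n,m\to\infty$ because $Tx_n\to 0$ and $\{Kx_n\}$ is Cauchy. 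Hence $\{x_n\}$ is Cauchy in the Banach space $X$, so $x_n\to x$ for some $x\in X$ with $\|x\|_X=1$.

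Finally, continuity of $T$ yields $Tx=\lim_n Tx_n = 0$, so $x\in\Ker T$; since $T$ is injective, $x=0$, contradicting $\|x\|_X=1$. This contradiction proves the improved estimate $\|x\|_X\lesssim\|Tx\|_Y$. The one point that deserves attention — and the only real subtlety — is that the hypothesis must be invoked for the \emph{differences} $x_n-x_m$ rather than for the $x_n$ individually: combined with the compactness of $K$ (which makes $\{Kx_n\}$ Cauchy along a subsequence) and with $Tx_n\to 0$, this is precisely what promotes the normalized sequence from merely bounded to convergent, after which injectivity of $T$ closes the argument. I would also remark that the method is purely qualitative and yields no control on the size of the implied constant.
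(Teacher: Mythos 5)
Your proof is correct and is essentially the standard compactness--contradiction argument that the paper invokes by citing \cite[Prop.~V.3.1]{Ta81} rather than proving the lemma itself; the key step of applying the hypothesis to the differences $x_n-x_m$ (so that compactness of $K$ upgrades boundedness to convergence) is exactly the right one. Nothing to add.
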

Now let $X=\mathcal S_\A\bfL(M)$, $Y=\bfH^1(M_1)$ and $Z=\bfH^{-1}(M_1)$, let $T=N_\A$ and $K$ be the inclusion map $\bfL(M)\hookrightarrow \bfH^{-1}(M_1)$. Assume that $I_{\A}$ is s-injective, then it is easy to see that $N_\A:\mathcal S_\A \bfL(M)\to \bfH^1(M_1)$ is injective. Notice that $N_\A \mathcal S_\A=N_\A$, by Lemma \ref{stability up to error} and Lemma \ref{functional lemma} we have the following stability estimate
\begin{equation}\label{stability estimate}
\|h^s_M\|_{\bfL(M)}\lesssim \|N_{\A}h\|_{\bfH^1(M_1)}
\end{equation}
for any $h\in \bfL(M)$. 
\end{proof}

Next we want to show that \eqref{stability estimate} is still true for $(\tilde g,\tilde \A)$ in some sufficiently small neighborhood of $(g,\A)$ under proper H\"older norms. We need the following lemma on the continuous dependence of $N_{g,\A}$ on $(g,\A)$.

\begin{lemma}\label{small perturbation}
Given $(g,\A)\in C^\infty(M_1)$, let $(\tilde g,\tilde \A)$ satisfy $\|g-\tilde g\|_{C^4(M_1)}\leq \epsilon$, $\|\A-\tilde \A\|_{C^3(M_1)}\leq \epsilon$ for some sufficiently small $\epsilon>0$, then the manifold $(M_1,\tilde g)$ is still simple and there exists a constant $C>0$ which only depends on $g,\A$ such that 
$$\|(N_{g,\A}-N_{\tilde g,\tilde \A})h\|_{\bfH^1(M_1)}\leq C\epsilon \|h\|_{\bfL(M)},$$
for any $h\in \bfL(M)$. 
\end{lemma}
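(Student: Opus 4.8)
The plan is to exploit the explicit integral representation of $N_{\A}=\tilde I^*_\A\,\tilde I_\A$ derived in Section 2 and track how each ingredient depends on the data $(g,\A)$. First I would recall that for $\epsilon$ small the perturbed manifold $(M_1,\tilde g)$ remains simple: simplicity is an open condition in the $C^2$ (indeed $C^4$ here, to control the exit time map and its derivatives) topology on the metric, so the exit times $\tau,\tau_-$, the geodesic flow, and hence the invariant extensions $u^\sharp$ depend smoothly on $g$ with uniform bounds. Next, the fundamental matrices $\tilde U_\A$, $\tilde W_\A$ solve the transport ODEs $X\tilde U_\A+\A\tilde U_\A=0$, $X\tilde W_\A=\tilde W_\A\A$ along geodesics; a Gr\"onwall-type estimate on the difference of two linear ODEs whose coefficients (the geodesic vector field $X=X_g$ and the attenuation $\A$) differ by $O(\epsilon)$ in $C^3$ shows $\|\tilde W_{g,\A}-\tilde W_{\tilde g,\tilde\A}\|_{C^2(SM_1^o)}\lesssim \epsilon$, with the constant depending only on $(g,\A)$ and the (uniform) geometry of the family. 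The $C^2$ regularity, as in Remark 2.2, is what is needed to keep the normal operator mapping $\bfL(M)\to\bfH^1(M_1)$ boundedly.

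Then I would write the difference $N_{g,\A}-N_{\tilde g,\tilde\A}$ by inserting and subtracting terms in the integrand of each block $N_\A^{ij}$: a term where only $\tilde W_\A$ changes, a term where only the metric-dependent volume/measure factors ($g_{i\ell}v^\ell$, $dv$, $dt$, the domain of integration given by $\tau$) change, and a term where only $u^\sharp$ / the geodesics change. Each such difference is again an operator of the same type as $N_\A$ — i.e. it has a kernel of the same Schwartz-kernel structure, the conormal singularity along the diagonal coming from the $t$-integration along geodesics through $x$ — but now with an extra factor of size $O(\epsilon)$ in the appropriate $C^k$ norm sitting inside the integral. By the standard parametrix/symbol calculus analysis of such ray-transform normal operators (as used in Lemma 3.2 via \cite{DPSU07}, and in \cite{SU04,SU05}), an operator of this form with an $O(\epsilon)$ smooth factor is a $\Psi$DO of order $-1$ whose full symbol, together with the symbol of one derivative, is $O(\epsilon)$; hence it maps $\bfL(M)\to\bfH^1(M_1)$ with norm $\lesssim \epsilon$. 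Summing the finitely many difference terms gives $\|(N_{g,\A}-N_{\tilde g,\tilde\A})h\|_{\bfH^1(M_1)}\le C\epsilon\|h\|_{\bfL(M)}$ with $C=C(g,\A)$ locally uniform, as claimed.

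The main obstacle is bookkeeping rather than conceptual: one must check that \emph{all} the metric-dependent pieces — not just the attenuation — vary continuously with the stated loss of derivatives, in particular the exit-time function $\tau_{\tilde g}$ and the invariant extension $u^\sharp_{\tilde g}$, which enter through the Jacobian of the geodesic flow and are the reason $C^4$ control of $g$ (two more than $C^2$) is imposed; and that the difference operators, though no longer of the clean form $\tilde I^*\tilde I$, still fall within the class of $\Psi$DO's of order $-1$ for which boundedness on the given Sobolev scale is available. Modulo this, the estimate follows by the same microlocal machinery already invoked for Lemma 3.2, now carrying an explicit small parameter. This lemma, combined with Lemma 2.1 on the continuity of the projections $\cS_\A$ and the already-established estimate \eqref{stability estimate} for $(g,\A)$, then yields Theorem 1.3(2) by a perturbation argument.
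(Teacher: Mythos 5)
Your proposal is correct and follows essentially the same route as the paper, which itself only sketches the argument by citing \cite[Proposition 4]{FSU08}, \cite[Proposition 3]{HS10} and \cite[Lemmas 5, 6]{Ho13}: control $\|X-\tilde X\|_{C^3}$ by $\|g-\tilde g\|_{C^4}$, deduce $\|\tilde W_{g,\A}-\tilde W_{\tilde g,\tilde\A}\|_{C^3}\lesssim\epsilon$ via the transport ODE, and then run the standard normal-operator/$\Psi$DO analysis on the difference. The only cosmetic discrepancy is that the paper tracks the weight difference in $C^3$ rather than $C^2$, but this does not affect the argument.
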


The proof of Lemma \ref{small perturbation} can be carried out in the same spirit of \cite[Proposition 4]{FSU08} and \cite[Proposition 3]{HS10}, see the related references for more details. In particular, $\|X-\tilde X\|_{C^3}\leq \|g-\tilde g\|_{C^4}$ where $X$ and $\tilde X$ are the generating vector fields of the geodesic flows under metric $g$ and $\tilde g$ respectively. In the mean time, $\|\tilde W_{g,\A}-\tilde W_{\tilde g,\tilde A}\|_{C^3}\lesssim \epsilon$ under the assumptions of the lemma \cite[Lemma 5, 6]{Ho13}.

\begin{proof}[Proof of Theorem \ref{linear stability} (2)]
Given $h\in \bfL(M)$, by Lemma \ref{projection perturbation}, Lemma \ref{small perturbation} and \eqref{stability estimate}
\begin{align*}
\|h^s_{M,\tilde g,\tilde \A}\|_{\bfL(M)}\leq &\|\mathcal S_{g,\A}h^s_{M,\tilde g,\tilde \A}\|_{\bfL(M)}+\|(\mathcal S_{g,\A}-\mathcal S_{\tilde g,\tilde \A})h^s_{M,\tilde g,\tilde \A}\|_{\bfL(M)}\\
\leq& C_0\|N_{g,\A}h^s_{M,\tilde g,\tilde \A}\|_{\bfH^1(M_1)}+C\epsilon \|h^s_{M,\tilde g,\tilde \A}\|_{\bfL(M)}\\
 \leq &C_0\|N_{\tilde g,\tilde \A}h^s_{M,\tilde g,\tilde\A}\|_{\bfH^1(M_1)}+C_0\|(N_{g,\A}-N_{\tilde g,\tilde \A})h^s_{M,\tilde g,\tilde\A}\|_{\bfH^1(M_1)}\\
 & \quad \quad +C\epsilon \|h^s_{M,\tilde g,\tilde \A}\|_{\bfL(M)}\\
\leq & C_0\|N_{\tilde g,\tilde \A}h\|_{\bfH^1(M_1)}+C_0C\epsilon\|h^s_{M,\tilde g,\tilde \A}\|_{\bfL(M)}+C\epsilon\|h^s_{M,\tilde g,\tilde\A}\|_{\bfL(M)}.
\end{align*}
Notice that $h=h^s_{M,\tilde g,\tilde\A}+d_{\tilde\A}\phi_{M,\tilde g,\tilde \A}$ on $M$, $\phi_{M,\tilde g,\tilde\A}|_{\p M}=0$. We may extend $h$, $h^s_{M,\tilde g,\tilde\A}$ and $\phi_{M,\tilde g,\tilde\A}$ by zero onto $M_1$ (so $\phi_{M,\tilde g,\tilde\A}\in H_0^1(M_1)$), then $N_{\tilde g,\tilde A}d_{\tilde\A}\phi_{M,\tilde g,\tilde\A}=0$ also as $\phi_{M,\tilde g,\tilde \A}|_{\p M_1}=0$ too, i.e. $N_{\tilde g,\tilde\A}h^s_{M,\tilde g,\tilde\A}=N_{\tilde g,\tilde\A}h$.

Now let $\epsilon\leq \frac{1}{2(C_0C+C)}$, we get that 
$$\|h^s_{M,\tilde g,\tilde \A}\|_{\bfL(M)}\leq 2C_0\|N_{\tilde g,\tilde \A}h\|_{\bfH^1(M_1)}.$$
Notice that $C_0$ is the constant from Theorem \ref{linear stability} (1), which only depends on $g,\,\A$, this completes the proof.
\end{proof}


\section{S-injectivity in the real-analytic category}

In what follows, analytic means real-analytic. We will first show that if $g$ and $\A$ are analytic and $I_\A h=0$, then $h^s$ is analytic on $M$, i.e. $h^s\in \mathbb A(M)$. It is easy to see that if $\A$ is analytic, then $W_\A$ is analytic too. We denote $\WF_a(h)$ the analytic wave front of $h$.  

\begin{prop}\label{interior analytic}
Assume that $g$ and $\A$ are analytic. Given $(x_0,\xi_0)\in T^*M^o\setminus 0$, let $\gamma_0$ be a geodesic through $x_0$ normal to $\xi_0$. If for some $h=[\alpha,f]\in \bfL(M)$, $I_\A h(\gamma)=0$ for $\gamma$ in a neighborhood of $\gamma_0$, and $\delta_\A h=0$ near $x_0$, then 
$$(x_0,\xi_0)\not\in \WF_a(h).$$
\end{prop}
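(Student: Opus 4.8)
The plan is to adapt the analytic-microlocal parametrix construction of Stefanov--Uhlmann \cite{SU05,SU08} to the attenuated, pair-valued setting. The key observation is that the normal operator $N_\A$, or more precisely a localized version of it built from geodesics near $\gamma_0$, is an analytic pseudodifferential operator in a neighborhood of $x_0$ (since $g$, $\A$, hence $\tilde W_\A$, are all real-analytic there), and by Lemma~\ref{elliptic PsiDO} it is elliptic on solenoidal pairs at $(x_0,\xi_0)$. I would first introduce a cutoff $\rho$ on $\p_+SM_1$ supported near the lift of $\gamma_0$ and consider $N_\A^\rho := \tilde I_\A^* \rho^2 \tilde I_\A$; since $I_\A h$ vanishes on a neighborhood of $\gamma_0$, we get $N_\A^\rho h = 0$ near $x_0$. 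The integral formulas for the principal symbol displayed in the proof of Lemma~\ref{elliptic PsiDO} show that $N_\A^\rho$ has an analytic full symbol (the amplitude involves $\tilde W_\A^*\tilde W_\A$ and delta-type kernels along analytic geodesics), so it belongs to the analytic $\Psi$DO calculus in the sense of Sj\"ostrand.

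Next, I would construct a microlocal analytic parametrix. Because $N_\A^\rho$ is elliptic only on the solenoidal subspace, I pair it with the operator $\delta_\A$ exactly as in the smooth case: the pair $(N_\A^\rho, \delta_\A)$ is elliptic as an analytic $\Psi$DO system at $(x_0,\xi_0)$. One then builds $P, Q$, analytic $\Psi$DOs near $(x_0,\xi_0)$, with $P N_\A^\rho + Q\,\delta_\A = \Id$ modulo an operator that is analytic-regularizing microlocally near $(x_0,\xi_0)$ (i.e. maps into distributions whose analytic wavefront avoids $(x_0,\xi_0)$). Applying this to $h$: the term $P N_\A^\rho h$ vanishes near $x_0$, and the term $Q\,\delta_\A h$ has $\WF_a$ disjoint from $(x_0,\xi_0)$ because $\delta_\A h = 0$ near $x_0$ by hypothesis and analytic $\Psi$DOs are analytic-pseudolocal. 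Hence $(x_0,\xi_0)\notin\WF_a(h)$. I would take care that, as in \cite{SU05}, one only needs $N_\A^\rho$ restricted to a conic neighborhood and only finitely many geodesics near $\gamma_0$, so that $\rho$ can be chosen compactly supported away from $\p(\p_+SM_1)$, where $\tilde W_\A$ is smooth/analytic.

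There is one extra subtlety compared with the unweighted case: the attenuation $\tilde W_\A$ is only globally defined on $SM_1$ along geodesics, and $N_\A^\rho$ is a $\Psi$DO only in the interior $M_1^o$. But since $(x_0,\xi_0)\in T^*M^o$ and $\gamma_0$ is an interior geodesic, one restricts attention to a small neighborhood $U\Subset M_1^o$ of $\gamma_0$ on which $\tilde W_\A$ is a genuine real-analytic function of $(x,v)$; all symbol estimates of the analytic calculus then hold with uniform constants on $U$. The decomposition $h = h^s + d_\A p$ enters only through the identity $\delta_\A h^s = 0$, which is why the hypothesis ``$\delta_\A h = 0$ near $x_0$'' (rather than globally) suffices locally.

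The main obstacle I expect is the careful verification that $N_\A^\rho$ is an \emph{analytic} $\Psi$DO with the required quantitative symbol bounds — i.e. controlling the $t$-integration along geodesics and the stationary-phase/delta-kernel reduction within the analytic class, where derivatives of the symbol must grow no faster than $C^{|\beta|}\beta!$. In the smooth setting this is routine (\cite[Prop.~4.1]{DPSU07}), but in the analytic class one must track the analytic-symbol seminorms through the change of variables that converts the double integral over $S_xM_1\times\RR$ into an oscillatory integral; the invertibility and analyticity of $\tilde W_\A^*\tilde W_\A$ is what makes the amplitude elliptic and analytic, but bookkeeping the Gevrey-$1$ bounds is the technical heart. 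Once that is in place, the parametrix construction and the wavefront conclusion follow the now-standard Sj\"ostrand--Stefanov--Uhlmann scheme essentially verbatim.
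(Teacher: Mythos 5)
Your route (a localized normal operator $N_\A^\rho=\tilde I_\A^*\rho^2\tilde I_\A$ plus an analytic parametrix for the system $(N_\A^\rho,\delta_\A)$) is genuinely different from the paper's, which never passes through a normal operator: the paper multiplies the vanishing identity $I_\A h(\gamma_{z',v'})=0$ by $e^{i\mu z'\cdot\xi'}\chi_N(z')$ for a \emph{sequence} of cutoffs $\chi_N$ satisfying $|\p^\beta\chi_N|\le (CN)^{|\beta|}$ for $|\beta|\le N$, applies Sj\"ostrand's complex stationary phase, collects $n$ such identities for directions $v_0,\dots,v_{n-1}$ spanning $\xi_0^\perp$ together with one identity obtained from $\delta_\A h=0$ by integration by parts, and checks that the resulting $(n+1)$-fold system has invertible principal symbol, concluding $(x_0,\xi_0)\notin\WF_a(h)$ directly in the sense of \cite[Def.~6.1]{Sj82}. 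The reason the paper takes this longer road is exactly the point you flag as a technical obstacle but treat as bookkeeping: it is a structural obstruction, not a Gevrey-estimate chore.

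Concretely, the gap is this. Because the hypothesis is local (vanishing of $I_\A h$ only for $\gamma$ near $\gamma_0$, and $\delta_\A h=0$ only near $x_0$), you must insert a directional cutoff $\rho$ supported near the lift of $\gamma_0$ in order to have $N_\A^\rho h=0$ near $x_0$. But a compactly supported $\rho$ cannot be real-analytic, and its non-analyticity enters the amplitude of $N_\A^\rho$ multiplicatively; hence $N_\A^\rho$ is a perfectly good classical $\Psi$DO (this is Remark \ref{generalize 1}) but it is \emph{not} an analytic $\Psi$DO, no matter how analytic $g$, $\A$ and $\tilde W_\A$ are. Consequently there is no analytic parametrix, and the pseudolocality you invoke only yields $C^\infty$ wavefront information, which is not what the proposition claims. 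This is precisely why the analytic-category literature (\cite{SU08,FSU08,HS10}) replaces the single cutoff by the family $\chi_N$ with the quantitative bounds \eqref{cut-off} and works at the level of the FBI/complex-stationary-phase identities rather than the normal operator. Your scheme would work, following \cite{SU05}, only in the global situation ($I_\A h\equiv 0$ on all of $\p_+SM_1$ and $\delta_\A h=0$ on all of $M$), where no cutoff is needed on a simple manifold; but that proves a weaker statement than Proposition \ref{interior analytic} and would not support the partial-data generalizations of Remark \ref{generalize 2}. A secondary point: even in your framework you would still need to verify ellipticity of the localized symbol using only directions $v\in S_{x_0}M\cap\xi_0^\perp$ close to $\dot\gamma_0(0)$; this is true (the paper picks $v_1,\dots,v_{n-1}$ near $v_0$ with $\{v_j-v_0\}$ spanning $\xi_0^\perp$), but it should be said rather than inherited wholesale from Lemma \ref{elliptic PsiDO}, whose symbol integrates over all of $S_xM_1\cap\xi^\perp$.
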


Since $\WF_a(h)$ is closed, above proposition also implies that a neighborhood of $(x_0,\xi_0)$ is away from $\WF_a(h)$.

\begin{proof}
Assume $\gamma_0:[\ell^-,\ell^+]\to M_1$, $\gamma_0(0)=x_0$ and $\ell^-<0,\,\ell^+>0$ with $\gamma_0(\ell^-)$, $\gamma_0(\ell^+)\in M_1\setminus M$. We can define analytic coordinates in a tubular neighborhood $U$ of $\gamma_0$ in $M_1$ with $x=(x',t),\, x'=(x^1,\cdots,x^{n-1})$ such that $U=\{|x'|<\varepsilon,\,\ell^--\varepsilon<t<\ell^++\varepsilon\}$ for some small $\varepsilon>0$, $x_0=0$ and $\gamma_0=\{(0,\cdots,0,t):\ell^-\leq t\leq \ell^+\}$. Then if $\varepsilon$ is small enough, $(x',\ell^-),\,(x',\ell^+)\in M_1\setminus M$ for any $|x'|<\varepsilon$. See also \cite[Sec. 2.1]{SU08}. In particular, one can assume that the geometry at $x_0=0$ is trivial, i.e. $g_{ij}(0)=\delta_{ij}$, then $\xi_0=(\xi'_0,0)$. Thus $\gamma_0=\gamma_{x_0,v_0}$ with $v_0=(0,\cdots,0,1)$.

For curves in a neighborhood of $\gamma_0$, we give a local parameterization under the analytic coordinates above. Given $|z'|<\varepsilon$ and $|v'|<1$ small, we consider curves $\gamma_{z',v'}:=\gamma_{(z',0),(v',1)}$ with $\gamma_{z',v'}(t)=\exp_{(z',0)}(t(v',1))$. Then for $|z'|<2\varepsilon/3$ and $|v'|\ll 1$, the curve $\gamma_{z',v'}([\ell^-,\ell^+])$ will stay in $U$ as well and $\gamma_{z',v'}(\ell^\pm)\in M_1\setminus M$. Thus $I_\A h(\gamma_{z',v'})=0$ for $|z'|<2\varepsilon/3,\,|v'|\ll 1$. 


Much of the complexity of analytic microlocal calculus is due to the difficulty of
localizing in the analytic category, as there are no suitable cut-off functions. Similar to \cite{SU08}, we instead use a sequence of cut-off functions $\chi_N\in C^\infty_c(\R^{n-1})$ satisfying $\supp (\chi_N)\subset \{|z'|<2\varepsilon/3\},\, \chi_N=1$ for $|z'|<\varepsilon/3$ and 
\begin{equation}\label{cut-off}
|\p^\beta \chi_N(z')|<(CN)^{|\beta|},\,\forall z'\in \R^{n-1},\,|\beta|<N
\end{equation}
for some $C>0$ independent of $N$. The existence of such cut-off functions can be found in e.g. \cite{Tr80}.

Let $\mu\gg 1$ be a large parameter, then for $\xi=(\xi',\xi^n)$ in a complex neighborhood of $\xi_0$ we have
$$\int e^{i\mu z'\cdot \xi'}\chi_N(z')\int \tilde W_\A(\gamma_{z',v'}(t),\dot\gamma_{z',v'}(t))\Big(\alpha(\dot\gamma_{z',v'}(t))+f(\gamma_{z',v'}(t))\Big)\,dt dz'=0.$$
Notice that with the help of the cut-off function $\chi_N$, we may make analytic coordinates change $(z',v',t)\to (x,\xi)$ near $v'=0$ with $v'=v'(\xi)$ (so $v'$ is sufficiently small when $\xi$ is close enough to $\xi_0$) such that $(v'(\xi),1)\cdot\xi=0$, $v'(\xi_0)=0$. In particular $x=\gamma_{z',v'}(t)$. Thus
\begin{equation}\label{int1}
\int e^{i\mu \varphi(x,\xi)} W_N(x,\xi)\Big(\alpha_i(x)u^i(x,\xi)+f(x)\Big)\,dx=0.
\end{equation}
Here $\varphi(x,\xi):=z'(x,v'(\xi))\cdot \xi'$ is the phase function. $W_N$ is an analytic matrix function for $\xi$ sufficiently close to $\xi_0$, independent of $N$ near $\gamma_0$ and satisfies \eqref{cut-off} too. $u(x,\xi)=(u^1(x,\xi),\cdots,u^n(x,\xi))$ is an analytic vector field. Note that $W_N(0,\xi)=\tilde W_\A(0,(v'(\xi),1))=\tilde W_\A(0,u(0,\xi))$. 

Now we are going to apply the method of complex stationary phase \cite{Sj82}, see also \cite{KSU07, SU08}. Notice that our phase function $\varphi$ is the same as the one considered in \cite{SU08}, in particular we have the following lemma.

\begin{lemma}\label{varphi}
The phase function $\varphi$ in \eqref{int1} satisfies the following properties:
\begin{enumerate}
\item $\p_{\xi}\p_x\varphi(0,\xi)=Id$, thus $\varphi$ is a non-degenerate phase function near $(0,\xi_0)$;
\item there exists $\delta>0$ such that if $\p_{\xi}\varphi(x,\xi)=\p_{\xi}\varphi(y,\xi)$ for some $x\in U$, $|y|<\delta$ and $|\xi-\xi_0|<\delta$, then $x=y$.
\end{enumerate}
\end{lemma}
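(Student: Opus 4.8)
The plan is to verify the two stated properties of the phase function $\varphi(x,\xi) = z'(x,v'(\xi))\cdot\xi'$ directly from the way it was constructed, essentially reproducing the computation from \cite[Sec. 2.1]{SU08} in the present notation. The key point is that $\varphi$ is built from the inverse of the map $(z',v',t)\mapsto(x,\xi)$ via $x = \gamma_{z',v'}(t)$, $(v'(\xi),1)\cdot\xi = 0$, subject to the normalizations $v'(\xi_0) = 0$ and trivial geometry at $x_0 = 0$, so both properties are ultimately statements about the geodesic flow in these special coordinates at the base point.

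For part (1): first I would compute $\varphi$ and its derivatives at the base point. Since $\gamma_{z',v'}(t) = \exp_{(z',0)}(t(v',1))$ and the geometry is trivial at $0$, along $\gamma_0$ we have $z'(x) = x'$ to first order and $v'(\xi)$ is the unique small solution of $(v'(\xi),1)\cdot\xi = 0$, which near $\xi_0 = (\xi_0',0)$ is $v'(\xi) = -\xi'/\xi^n$ to leading order (with the appropriate sign/normalization), vanishing at $\xi = \xi_0$ where $\xi^n = 0$... more carefully, one expands $z'(x,v'(\xi))$ around $(0,\xi_0)$. The claim $\p_\xi\p_x\varphi(0,\xi) = \Id$ follows because $z'(x,v'(\xi)) \cdot \xi' = x'\cdot\xi' + O(|x'|^2) + (\text{terms vanishing with } v')$, so $\p_{x}\varphi = \xi' + \dots$ and then differentiating in $\xi$ gives the identity on the relevant block, with the $\xi^n$-direction handled by the constraint defining $v'$. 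Non-degeneracy of $\varphi$ as a phase function is then immediate from $\det\p_\xi\p_x\varphi(0,\xi_0) = 1 \neq 0$.

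For part (2): this is the local injectivity of $x\mapsto \p_\xi\varphi(x,\xi)$ for $\xi$ near $\xi_0$, which I would deduce from part (1) plus a continuity/compactness argument. At $\xi = \xi_0$ one checks that $x\mapsto\p_\xi\varphi(x,\xi_0)$ is injective on $U$: this is where the global structure of the coordinates along $\gamma_0$ enters — $\p_\xi\varphi(x,\xi_0)$ encodes the position $z'$ at which the geodesic through $x$ in direction $(v'(\xi_0),1) = (0,1)$, i.e. the vertical geodesic, meets the slice $\{t = 0\}$, namely just $x'$ itself (vertical geodesics in these coordinates are $x' = \text{const}$), plus a term recording $t$; distinctness of these data for distinct $x\in U$ is exactly the graph/foliation property built into the tubular coordinates. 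Then, having injectivity at $\xi_0$ on the compact set $\bar U$ and the infinitesimal injectivity $\p_x\p_\xi\varphi(0,\xi) = \Id$ from (1), a standard argument (cover $\bar U\times\bar U$ minus a neighborhood of the diagonal and use compactness; handle the diagonal via the nonvanishing Jacobian) gives $\delta > 0$ such that injectivity persists for $|y| < \delta$, $|\xi - \xi_0| < \delta$. The main obstacle I expect is part (2): making the passage from "injective at $\xi_0$" to "injective for $\xi$ near $\xi_0$" rigorous requires care because $x$ ranges over the full neighborhood $U$ (not a small one), so one cannot simply invoke the inverse function theorem — one must combine the global injectivity at $\xi_0$ (a genuine property of the simple geometry and the tubular coordinates, not just a local linear-algebra fact) with a compactness argument, exactly as in \cite[Sec. 2.1, Lemma 2.2]{SU08}.
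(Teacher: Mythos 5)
The paper offers no proof of this lemma at all: it observes that $\varphi$ coincides with the phase function of \cite[Sec.~2.1]{SU08} and imports the statement by citation. Your proposal reconstructs that argument, and its structure is correct. For (1), the right computation is $z'(x,v')=x'-x^nv'+O(|x|^2)$ (trivial geometry at $0$), so that the constraint $(v'(\xi),1)\cdot\xi=0$, i.e. $v'(\xi)\cdot\xi'=-\xi^n$, yields $\varphi(x,\xi)=x'\cdot\xi'-x^n\,v'(\xi)\cdot\xi'+O(|x|^2)=x\cdot\xi+O(|x|^2)$, whence $\p_x\varphi(0,\xi)=\xi$ and $\p_\xi\p_x\varphi(0,\xi)=\Id$. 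For (2), your two-step scheme --- global injectivity of $x\mapsto\p_\xi\varphi(x,\xi_0)$ on the closed tube (since $\p_\xi\varphi$ recovers the intercept $z'$ and, via $\p_{v'}z'\approx -t\,\Id$, the parameter $t$, and $(z',t)\mapsto x$ is injective by simplicity), then compactness away from the diagonal plus the nondegenerate mixed Hessian near it --- is exactly the \cite{SU08} argument; note that the diagonal case is unproblematic precisely because $|y|<\delta$ forces both points into a small ball around $0$ where (1) applies. One slip should be corrected: the leading-order formula $v'(\xi)=-\xi'/\xi^n$ is wrong --- it does not satisfy $(v'(\xi),1)\cdot\xi=0$ and it blows up, rather than vanishes, at $\xi_0$ (where $\xi_0^n=0$). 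The constraint is a single scalar equation and does not determine $v'(\xi)$ for $n\geq 3$; one must \emph{choose} an analytic solution with $v'(\xi_0)=0$, e.g. $v'(\xi)=-\xi^n\xi'/|\xi'|^2$, and with such a choice the computation above goes through and is independent of the choice made.
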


Now let $|y|<\delta$, $|\eta-\xi_0|<\delta/2$, let $\rho$ be a smooth cut-off function such that $\supp \rho\subset \{|\xi|<\delta\}$ and $\rho(\xi)=1$ for $|\xi|<\delta/2$. We multiple \eqref{int1} by 
$$\rho(\xi-\eta) e^{i\mu\Big(\frac{i}{2}(\xi-\eta)^2-\varphi(y,\xi)\Big)},$$
and integrate in $\xi$ to get
\begin{equation}\label{int2}
\int\int e^{i\mu \Phi(x,y,\xi,\eta)} \widetilde{W}_N(x,\xi,\eta)\Big(\alpha_i(x)u^i(x,\xi)+f(x)\Big)\,dx d\xi=0,
\end{equation}
where 
$$\Phi(x,y,\xi,\eta)=\frac{i}{2}(\xi-\eta)^2+\varphi(x,\xi)-\varphi(y,\xi),\quad \widetilde W_N(x,\xi,\eta)=\rho(\xi-\eta)W_N(x,\xi).$$

To estimate the left-hand side of \eqref{int2}, we first study the critical points of the function $\xi \mapsto \Phi(x, y, \xi, \eta)$. Note that
\[
 \p_\xi \Phi(x, y, \xi, \eta) = i(\xi-\eta) + \partial_\xi\varphi(x, \xi) - \partial_\xi\varphi(y, \xi).
\]
By Lemma \ref{varphi} (2), when $x=y$, the only critical point of $\Phi$ is $\xi_c=\eta$ which is non-degenerate. Therefore for $|x-y|\leq\delta/C_0$, some $C_0>0$, there is at most one (complex) critical point $\xi_c=\xi_c(x,y,\eta)$ in $|\xi-\eta|<\delta$, while none if $|x-y|>\delta/C_0$. 

Denote $\zeta:=\p_x\varphi(y,\eta)$, by Lemma \ref{varphi} (1) we can change variables $(x,y,\eta)\to (x,y,\zeta)$ in a sufficiently small neighborhood of $(0,0,\xi_0)$ and define 
$$\Psi(x,y,\zeta):=\Phi(x,y,\xi_c,\eta).$$
Then
$$\Psi(x,x,\zeta)=0,\quad \p_x\Psi(x,x,\zeta)=\zeta,\quad \p_y\Psi(x,x,\zeta)=-\zeta$$
and
$$\mbox{Im}\, \Psi(x,y,\zeta)>|x-y|^2/C.$$
Note that $\widetilde W_N$ is analytic and independent of $N$ on $|x-y|\leq \delta/C_0$, we apply the complex stationary phase lemma \cite[Theorem 2.8, 2.10]{Sj82} to \eqref{int2} to get that 
\begin{equation}\label{int3}
\int_{|x-y|\leq C} e^{i\mu \Psi(x,y,\zeta)} \widetilde{W}(x,y,\zeta;\mu)\Big(\alpha_i(x)u^i(x,y,\zeta)+f(x)\Big)\,dx=\mathcal O (e^{-\mu/C}),
\end{equation}
where $\widetilde W$ is an analytic matrix weight. Note that the right-hand side of \eqref{int3} is independent of $\delta$, for fixed $\delta$, we simply replace $\delta/C_0$ with some positive number $C$. 

For the following argument, we consider the left-hand side of \eqref{int3} as an operator with a matrix-valued symbol acting on the pair $[\alpha,f]$, thus \eqref{int3} can be rewritten as
\begin{equation}\label{int4}
\int_{|x-y|\leq C} e^{i\mu \Psi(x,y,\zeta)} P(x,y,\zeta;\mu)\begin{pmatrix}\alpha \\ f\end{pmatrix}\,dx=\mathcal O (e^{-\mu/C}),
\end{equation}
where $P(x,y,\zeta;\mu)=\widetilde W(x,y,\zeta;\mu)\begin{pmatrix} u(x,y,\zeta)Id_{k\times k} & Id_{k\times k} \end{pmatrix}$ is a classical analytic symbol. Notice that $u(0,\xi_0)=v_0=(0,\cdots,0,1)$, the principal symbol satisfies
$$\sigma_p(P)(0,0,\xi_0)=\tilde W_\A(0,u(0,\xi_0))\begin{pmatrix} u(0,\xi_0)Id_{k\times k} & Id_{k\times k} \end{pmatrix}=\tilde W_\A(0,v_0)\begin{pmatrix} v_0Id_{k\times k} & Id_{k\times k} \end{pmatrix}$$
(note that $\alpha$ and $f$ are vector-valued 1-forms and functions). Recall that $v_0\perp \xi_0$, in any small neighborhood of $v_0$, we can find another $n-1$ unit vectors $v_1,\cdots, v_{n-1}$ at $x_0=0$ such that $v_j\perp \xi_0$, $j=1,\cdots,n-1$ and $\{v_1-v_0,\cdots,v_{n-1}-v_0\}$ form a basis of the orthogonal plane $\xi_0^{\perp}$. Moreover, the geodesic $\gamma_{x_0,v_j}$, $j=1,\cdots,n-1$ will stay in $U$ for the time interval $[\ell^-,\ell^+]$ with $\gamma_{x_0,v_j}(\ell^\pm)\in M_1\setminus M$ if $v_j$ is sufficiently close to $v_0$. We repeat the argument above under the new coordinates change $(z',v,t)\to (x,\xi)$ with $v(\xi_0)=v_j$ to get totally $n$ equations
\begin{equation}\label{int5}
\int_{|x-y|\leq C} e^{i\mu \Psi_j(x,y,\zeta)} P_j(x,y,\zeta;\mu)\begin{pmatrix}\alpha \\ f\end{pmatrix}\,dx=\mathcal O (e^{-\mu/C}),\quad j=0,1,\cdots,n-1
\end{equation}
with $\Psi_0$ and $P_0$ being exactly $\Psi$ and $P$ in \eqref{int4},
\begin{equation}\label{symbol 1}
\sigma_p(P_j)(0,0,\xi_0)=\tilde W_\A(0,v_j)\begin{pmatrix} v_j Id_{k\times k} & Id_{k\times k} \end{pmatrix}.
\end{equation}

To make the system \eqref{int5} into an elliptic system, we need one more equation from the property that $\delta_\A[\alpha,f]=0$ in some neighborhood $V$ of $x_0=0$. As in \cite{SU08}, let $\chi$ be a smooth cut-off function on $M_1$ supported in $V\cap U$ and $\chi=1$ near $0$, thus $\chi\delta_\A[\alpha,f]\equiv 0$. Applying the integration by parts,
\begin{align*}
0 & =\int\frac{1}{\mu}e^{i\mu\Psi_0(x,y,\zeta)}\chi\delta_\A[\alpha,f]\,dx=-\int\frac{1}{\mu}d_\A(e^{i\mu\Psi_0(x,y,\zeta)}\chi)\begin{pmatrix}\alpha \\ f \end{pmatrix}\,dx\\
& =-\int e^{i\mu\Psi_0(x,y,\zeta)}\begin{pmatrix} i\chi\p_x\Psi_0(x,y,\zeta)+\frac{1}{\mu}(d\chi+A\chi) & \frac{1}{\mu}\Phi\chi \end{pmatrix}\begin{pmatrix}\alpha \\ f \end{pmatrix}\,dx\\
& =-\int e^{i\mu\Psi_0(x,y,\zeta)}Q(x,y,\zeta)\begin{pmatrix}\alpha \\ f \end{pmatrix}\,dx.
\end{align*}
Notice that $\p_x\Psi_0(x,x,\zeta)=\zeta$, therefore 
\begin{equation}\label{symbol 2}
\sigma_p(Q)(0,0,\xi_0)=\begin{pmatrix}\xi_0 Id_{k\times k} & 0\end{pmatrix}.
\end{equation}

We combine the $n+1$ equations above into one system
\begin{align*}
& \int_{|x-y|\leq C} \diag \{e^{i\mu\Psi_0},\cdots,e^{i\mu\Psi_{n-1}},e^{i\mu\Psi_0}\}(x,y,\zeta)\begin{pmatrix} P_0(x,y,\zeta;\mu) \\ \vdots \\ P_{n-1}(x,y,\zeta;\mu) \\ Q(x,y,\zeta) \end{pmatrix}\begin{pmatrix}\alpha \\ f\end{pmatrix}\,dx\\
=& \int_{|x-y|\leq C} \diag \{e^{i\mu\Psi_0},\cdots,e^{i\mu\Psi_{n-1}},e^{i\mu\Psi_0}\}(x,y,\zeta){\bf P}(x,y,\zeta;\mu)\begin{pmatrix}\alpha \\ f\end{pmatrix}\,dx=\mathcal O(e^{-\mu/C}),
\end{align*}
where ${\bf P}$ is a matrix-valued classical analytic symbol near $x=0$. We claim that the system is elliptic at $(0,0,\xi_0)$, which is equivalent to the invertibility of the principal symbol $\sigma_p({\bf P})$ at $(0,0,\xi_0)$. Assume $\sigma_p({\bf P})[\alpha,f](0,0,\xi_0)=0$, by \eqref{symbol 1} and \eqref{symbol 2}, 
\begin{equation*}
\tilde W_\A(0,v_j)(v_j\cdot\alpha+f)=0,\,j=0,1,\cdots,n-1;\quad \xi_0\cdot \alpha=0.
\end{equation*}
By an argument almost identical to the one in Lemma \ref{elliptic PsiDO} one can show that $\alpha=0$ and $f=0$, so the system with classical analytic symbol ${\bf P}$ is elliptic at $(0,0,\xi_0)$.

Now follow the argument in \cite{SU08, HS10}, see also \cite[Prop. 6.2]{Sj82}, we can reduce the system to a new one of the form
$$\int_{|x-y|\leq C}e^{i\mu\Psi_0(x,y,\zeta)}{\bf Id}\begin{pmatrix} \alpha \\ f \end{pmatrix}\,dx=\mathcal O(e^{-\mu/C}),$$
for $(y,\zeta)$ close to $(0,\xi_0)$. This shows that $(0,\xi_0)\not\in \WF_a([\alpha,f])$ in the sense of \cite[Def. 6.1]{Sj82}, and completes the proof.
\end{proof}

\begin{rem}\label{generalize 2}
One can easily see from the proof of above proposition that for each $(x,\xi)\in T^*M^o\setminus 0$, we only require the existence of some $v\in S_xM$ conormal to $\xi$ and an arbitrarily small neighborhood of $\gamma_{x,v}$ whose elements are all `good' geodesics, in particular this is true under the microlocal condition of Remark \ref{microlocal condition}.
\end{rem}

Proposition \ref{interior analytic} shows that $I_\A h\equiv 0$ implies that $h^s$ is analytic in $M^o$, i.e. $h^s\in \mathbb A(M^o)$, the next lemma shows that $h^s$ indeed is analytic upto the boundary $\p M$. The proof of the lemma is almost identical to the one of \cite[Lemma 6]{SU08}, so we omit it here.

\begin{lemma}\label{analytic up to boundary}
Assume that $(M,g)$ is simple, $g$ and $\A$ are analytic, if $I_\A h\equiv 0$, then $h^s\in \mathbb A(M)$.
\end{lemma}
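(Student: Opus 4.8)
The plan is to reduce the analyticity of $h^s=h^s_M$ up to $\p M$ to two facts: analyticity of the solenoidal part of $h$ with respect to a slightly enlarged manifold (on which the relevant boundary points of $M$ have become interior points), together with analyticity of the boundary trace of the associated potential. Recall that on $M_1$ one has $h=h^s_{M_1}+d_\A\phi_{M_1}$ with $\phi_{M_1}|_{\p M_1}=0$, while on $M$ one has $h=h^s_M+d_\A\phi_M$ with $\phi_M|_{\p M}=0$. Subtracting, on $M$
\[
h^s_M=h^s_{M_1}\big|_M+d_\A w,\qquad w:=\phi_{M_1}\big|_M-\phi_M ,
\]
and since $\Delta_\A\phi_{M_1}=\Delta_\A\phi_M=\delta_\A h$ on $M$, the function $w\in H^1(M)$ solves the regular analytic elliptic Dirichlet problem $\Delta_\A w=0$ in $M$, $w|_{\p M}=\phi_{M_1}|_{\p M}$ (the sources cancel). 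Thus it is enough to prove: (i) $h^s_{M_1}$ is real-analytic on a neighbourhood of $\overline M$, so that $h^s_{M_1}|_M\in\mathbb A(M)$; and (ii) $\phi_{M_1}|_{\p M}\in\mathbb A(\p M)$. Granting these, the Morrey--Nirenberg analyticity theorem for elliptic systems with real-analytic coefficients and real-analytic Dirichlet data on a real-analytic boundary gives $w\in\mathbb A(M)$, whence $h^s_M=h^s_{M_1}|_M+d_\A w\in\mathbb A(M)$ because $g$ and $\A$ are real-analytic.

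For (i), I would pass to one further simple enlargement $M_1\Subset M_2$, extending $g$ and $\A$ real-analytically. Since $\phi_{M_1}|_{\p M_1}=0$, the pair $d_\A\phi_{M_1}$ lies in the kernel of $I_\A$ (the endpoint formula $I_\A d_\A\phi_{M_1}=\tilde W_\A|_{\mathrm{exit}}\,\phi_{M_1}|_{\mathrm{exit}}-\phi_{M_1}|_{\mathrm{entry}}$ vanishes because both endpoints lie on $\p M_1$), so $I_\A h^s_{M_1}=I_\A h=0$ on every geodesic of $M_1$; moreover $\delta_\A h^s_{M_1}=0$ on all of $M_1$. Because $\supp h^s_{M_1}\subset M_1\Subset M_2^o$, the complex–stationary–phase argument of Proposition \ref{interior analytic}, run with $(M,M_1)$ replaced by $(M_1,M_2)$, applies verbatim with $h^s_{M_1}$ in place of $h$ and yields $(x_0,\xi_0)\notin\WF_a(h^s_{M_1})$ for every $(x_0,\xi_0)\in T^*M_1^o\setminus 0$. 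Hence $h^s_{M_1}\in\mathbb A(M_1^o)$, and since $\overline M\subset M_1^o$ this is (i).

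For (ii) — the only genuinely new point, and the main obstacle — I would work in the collar $M_1\setminus M^o$, which in the analytic semigeodesic coordinates based on $\p M_1$ is foliated by the geodesics normal to $\p M_1$, each crossing $\p M$. On this collar $h\equiv 0$, so $d_\A\phi_{M_1}=-h^s_{M_1}$ there, i.e.\ $X(\tilde W_\A\phi_{M_1})=-\tilde W_\A h^s_{M_1}$ along the normal geodesics. Integrating from $\p M_1$, where $\phi_{M_1}=0$, exactly as in the derivation of \eqref{integral eq} (with $N_\A h=0$, so that the right-hand side there reduces to an integral of $\tilde W_\A h^s_{M_1}$ against the fundamental matrix plus an analytic boundary term on $\p M_1$), expresses $\phi_{M_1}$ on the collar through $\tilde U_\A$, $\tilde W_\A$ and $h^s_{M_1}$. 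Each of these is real-analytic in the collar up to and across $\p M$: $\tilde U_\A,\tilde W_\A$ because $g,\A$ are real-analytic and they solve linear ODEs along the normal geodesics, and $h^s_{M_1}$ by (i). Hence $\phi_{M_1}$ is real-analytic up to $\p M$ from the collar side, and its trace $\phi_{M_1}|_{\p M}$ is real-analytic on $\p M$. The point to stress is that $\phi_{M_1}$ is only $H^1$ near $\p M$ from the $M$-side — the source $\delta_\A h$ carries a layer on $\p M$ — so its boundary trace cannot be read off there; it is exactly on the $M_1\setminus M$ side that the equation degenerates to the analytic transport equation $d_\A\phi_{M_1}=-h^s_{M_1}$ with analytic Dirichlet datum $0$ on $\p M_1$, which transports real-analyticity all the way to $\p M$. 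Everything else is the orthogonal-decomposition bookkeeping of Appendix \ref{decomposition} together with Proposition \ref{interior analytic} and the standard analytic elliptic boundary regularity theorem.
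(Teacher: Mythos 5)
Your argument is correct and is essentially the proof the paper has in mind: the paper omits the details by pointing to \cite[Lemma 6]{SU08}, and your reduction — analyticity of $h^s_{M_1}$ near $\overline M$ via Proposition \ref{interior analytic} applied on a further extension, analyticity of the trace $\phi_{M_1}|_{\p M}$ obtained by integrating $d_\A\phi_{M_1}=-h^s_{M_1}$ in the collar $M_1\setminus M$ where $h$ vanishes, and then analytic elliptic regularity for $\Delta_\A w=0$ with analytic Dirichlet data — is exactly that argument. No gaps.
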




We also need the following lemma which will be useful for the proof of Theorem \ref{analytic s-injective}.

\begin{lemma}\label{infinite vanishing on boundary}
Assume that $I_{\A}h=0$ for some $h\in C^\infty(M)$, then there exists $\phi\in C^\infty(M)$ with $\phi|_{\p M}=0$ such that if $\tilde h:=h-d_\A \phi=[\tilde \alpha,\tilde f]$, then
$$\p^{\sigma}\tilde h|_{\p M}=0$$
for all multiindices $\sigma$ and 
$$\tilde \alpha_n=0$$
 in boundary normal coordinates near $\p M$. 
\end{lemma}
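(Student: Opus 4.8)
The plan is to split the statement into two tasks: (i) use the gauge $d_\A\phi$ with $\phi|_{\p M}=0$ to make the normal component of the $1$-form vanish near $\p M$, and (ii) show that, once $\tilde\alpha_n=0$ near $\p M$, the vanishing of $I_\A$ on geodesics grazing $\p M$ forces every boundary derivative of $\tilde h$ to vanish. For (i), in a collar of $\p M$ I would take boundary normal coordinates $x=(y,t)$ with $t\ge 0$ the distance to $\p M$ and $\p_t$ the inward unit normal, and solve along the normal geodesics the linear ODE $\p_t\phi+A(\p_t)\phi=\alpha(\p_t)=\alpha_n$ with $\phi|_{t=0}=0$; this produces a smooth $\C^k$-valued $\phi$ on the collar with $\phi|_{\p M}=0$, which I multiply by a cutoff equal to $1$ near $\p M$ to obtain $\phi\in C^\infty(M,\C^k)$, $\phi|_{\p M}=0$. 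Since $d_\A\phi=[(d+A)\phi,\Phi\phi]$ lies in $\Ker I_\A$, the pair $\tilde h:=h-d_\A\phi=[\tilde\alpha,\tilde f]$ still satisfies $I_\A\tilde h=0$, and by construction $\tilde\alpha_n=\alpha_n-((d+A)\phi)_n=0$ on the collar. Relabelling, it remains to prove that $I_\A h=0$ together with $\alpha_n=0$ near $\p M$ imply $\p^\mu h|_{\p M}=0$ for every multi-index $\mu$.

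For (ii), fix $p_0\in\p M$ and work in the coordinates above with $p_0=(0,0)$, inside the enlarged simple manifold $M_1\supset M$. By strict convexity there is $c>0$ such that any unit-speed geodesic tangent to $\p M$ near $p_0$ satisfies $\frac{d^2}{ds^2}(t\circ\gamma)\le -c$ at the point of tangency; hence for $z$ near $0$, $\omega\in\sphere^{n-2}$ and small $\sigma>0$ the geodesic $\gamma_{z,\sigma,\omega}$ with $\gamma_{z,\sigma,\omega}(0)=(z,0)$ and $\dot\gamma_{z,\sigma,\omega}(0)=(\sqrt{1-\sigma^2}\,\omega,\sigma)\in\p_+SM$ stays in the coordinate patch, reaches depth $t=O(\sigma^2)$, and exits $M$ at time $\tau(z,\sigma,\omega)=O(\sigma)$. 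Since $h$ is supported in $M$, $I_\A h=0$, and $\alpha_n=0$ near $\p M$,
\[
\int_0^{\tau(z,\sigma,\omega)} W_\A(\gamma_{z,\sigma,\omega},\dot\gamma_{z,\sigma,\omega})\Big(\alpha_\beta(\gamma_{z,\sigma,\omega})(\dot\gamma_{z,\sigma,\omega})^\beta+f(\gamma_{z,\sigma,\omega})\Big)\,dt=0,
\]
with $\beta$ running over tangential indices. Substituting $t=\sigma r$ and dividing by $\sigma$, the left side becomes a function of $(z,\sigma,\omega)$ that is $C^\infty$ down to $\sigma=0$ (by strict convexity and the implicit function theorem the rescaled exit time $\tau/\sigma$ extends smoothly, as in \cite{SU08}) and vanishes identically, so all its $\sigma$-Taylor coefficients at $\sigma=0$ vanish. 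Along $\gamma_{z,\sigma,\omega}$ one has $\gamma^n(\sigma r)=\sigma^2 g_0(r)+O(\sigma^3)$ with $g_0\ge 0$ not identically zero on $[0,R_0]$, $\gamma^\beta(\sigma r)=z^\beta+\sigma r\omega^\beta+O(\sigma^2)$, $(\dot\gamma)^\beta(\sigma r)=\omega^\beta+O(\sigma)$ and $W_\A=\id+O(\sigma)$; Taylor expanding $\alpha$ and $f$ at $(z,0)$ then shows the $\sigma^m$-coefficient is a linear combination, with coefficients smooth in $(z,\omega)$, of the derivatives $\p^\mu\alpha(z,0)$, $\p^\mu f(z,0)$ with $\abs{\mu'}+2\mu_n\le m$ (writing $\mu=(\mu',\mu_n)$), the terms with $\abs{\mu'}+2\mu_n=m$ coming directly from the expansion of $h$ and the rest carrying lower-order corrections of $W_\A$, $\dot\gamma$ and $\tau/\sigma$.

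I would then prove $\p_t^{\,j}h(\cdot,0)\equiv 0$ by induction on $j\ge 0$; together with tangential differentiation this gives $\p^\mu h|_{\p M}=0$ for all $\mu$. For $j=0$ the $\sigma^0$-coefficient is a positive multiple of $\alpha_\beta(z,0)\omega^\beta+f(z,0)$, so taking $\pm\omega$ forces $f(z,0)=0$ and $\alpha_\beta(z,0)=0$, and with $\alpha_n(z,0)=0$ this is $h|_{\p M}=0$. For the step, assume $\p_t^{\,i}h(\cdot,0)\equiv 0$ for $i\le j$, so $\p^\mu h(z,0)=0$ whenever $\mu_n\le j$, and examine the $\sigma^{2j+2}$-coefficient: every term with $\abs{\mu'}+2\mu_n\le 2j+1$ has $\mu_n\le j$ and so vanishes by hypothesis, while a surviving term with $\mu_n\ge j+1$ is forced to have $\abs{\mu'}+2\mu_n=2j+2$, hence $\abs{\mu'}=0$, $\mu_n=j+1$; what remains is a nonzero multiple of $\int_0^{R_0} g_0(r)^{j+1}\,dr$ times $\p_t^{\,j+1}\alpha_\beta(z,0)\,\omega^\beta+\p_t^{\,j+1}f(z,0)$, and taking $\pm\omega$ together with $\p_t^{\,j+1}\alpha_n(z,0)=0$ gives $\p_t^{\,j+1}h(\cdot,0)\equiv 0$.

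The step I expect to be the main obstacle is precisely this inductive bookkeeping in the boundary layer: one has to verify carefully that in the $\sigma^{2j+2}$-coefficient the only derivative of $h$ not already annihilated by the inductive hypothesis is $\p_t^{\,j+1}h(z,0)$, and that it occurs with a genuinely nonvanishing scalar coefficient, so that the induction closes; establishing the smoothness of the rescaled exit time $\tau/\sigma$ at $\sigma=0$, needed even to speak of $\sigma$-Taylor coefficients, is a secondary technical point, handled exactly as in \cite{SU08, FSU08}.
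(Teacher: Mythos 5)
Your construction of $\phi$ (solving $\p_n\phi+A_n\phi=\alpha_n$, $\phi|_{x^n=0}=0$ along the normal direction, then cutting off) is exactly the paper's, and both halves of your argument appear sound. For the infinite-order vanishing, however, you take a genuinely different route. The paper extends $\tilde h$ by zero to $M_1$, observes that $N_\A\tilde h=0$ and hence is smooth near $M$, and then reruns the symbol computation of Lemma \ref{elliptic PsiDO} with the constraint $\alpha_i\xi^i=0$ replaced by $\alpha_n=0$ to conclude that $N_\A$ is elliptic on such pairs at covectors conormal to $\p M$; therefore $N^*(\p M)\cap\WF(\tilde h)=\emptyset$, and since $\tilde h$ vanishes identically on one side of $\p M$ this forces $\p_n^k\tilde h|_{x^n=0}=0$ for all $k$. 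You instead perform the classical boundary-layer expansion over short, nearly tangent geodesics, rescaling $t=\sigma r$ and extracting Taylor coefficients in $\sigma$, with the anisotropic count $|\mu'|+2\mu_n\le m$ and an induction on the order of the normal derivative. Your bookkeeping closes: at order $\sigma^{2j+2}$ the only derivative not killed by the inductive hypothesis is $\p_t^{j+1}h(z,0)$, with scalar coefficient $\frac{1}{(j+1)!}\int_0^{R_0}g_0^{j+1}\,dr>0$, and the $\pm\omega$ trick finishes the step. The trade-off: the paper's argument is two lines once the ellipticity computation is in hand and avoids all expansion bookkeeping, while yours is self-contained, elementary, and strictly local at the boundary point (it only uses geodesics in an arbitrarily small neighborhood of the tangent ones at $p_0$). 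One point you should make explicit: $W_\A$ is only continuous on $SM$ at the glancing set $S(\p M)$, and your geodesics approach that set; the smoothness in $(z,\sigma,\omega,r)$ down to $\sigma=0$ that your Taylor expansion needs holds because along $\gamma_{z,\sigma,\omega}$ the weight is the solution of $\dot W=W\A$ with $W(0)=\id$ at the entry point, which depends smoothly on the initial data $(z,\sigma,\omega)$ by ODE theory — not because $W_\A$ is smooth as a function on $SM$ there.
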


\begin{proof}
If $h=[\alpha,f]$, we consider the following equation in boundary normal coordinates $(x',x^n),\,0\leq x^n<\varepsilon\ll 1$ near $\p M=\{x^n=0\}$
\begin{equation}\label{solve for phi}
\p_n\phi+A_n\phi=\alpha_n,\quad \phi|_{x^n=0}=0.
\end{equation}
We solve \eqref{solve for phi} by integrating along $x^n$ to get $\phi$ in a neighborhood of $\p M$, and one can check that $\phi$ is well-defined (independent of local coordinates) and smooth near $\p M$. By multiplying $\phi$ with a proper cut-off function we can assume that $\phi$ is globally defined on $M$. Then we define $\tilde h=[\tilde \alpha,\tilde f]=h-d_\A\phi$, by \eqref{solve for phi} $\tilde \alpha_n=0$ near $x^n=0$.  

Next we show that $\tilde h$ vanishes to infinite order on $\p M$. Extend $\tilde h$ by zero to $M_1$, still denoted by $\tilde h$, then by the assumption $I_{\A} h=0$ we have $N_{\A}\tilde h=0$ in $M_1^o$, in particular $N_{\A}\tilde h$ is smooth near $M$. On the other hand, if we replace the condition $\alpha_i \xi^i=0$ by $\alpha_n=0$ in the proof of Lemma \ref{elliptic PsiDO}, it is not difficult to check that $(\sigma_p(N_{\A})(x,\xi)[\alpha,f],[\alpha,f])=0$ together with $\alpha_n=0$ implies that $[\alpha,f]=0$ for $\xi_n\neq 0$. This means that $N_{\A}$ is elliptic for $(x,\xi)$ conormal to $\p M$. Thus $N^*(\p M)\cap WF(\tilde h)=\emptyset$. Since $\tilde h=0$ in $M_1\setminus M$, we get that $\p_n^k \tilde h|_{x^n=0}=0$, $\forall k\geq 0$. After rewriting the conclusion in an invariant way, the proof is done.
\end{proof}

\begin{proof}[Proof of Theorem \ref{analytic s-injective}]
Assume $I_\A h=0$ and $h=h^s$, then by Lemma \ref{analytic up to boundary} $h\in \mathbb A(M)$. Applying Lemma \ref{infinite vanishing on boundary}, there exists $\phi\in C^\infty(M)$ with $\phi|_{\p M}=0$ such that $\tilde h=h-d_\A\phi$ satisfies $\p^\sigma\tilde h|_{\p M}=0$ for all multiindices $\sigma$. Since $\A$ and $h$ are analytic on $M$, and $\phi$ solves the equation \eqref{solve for phi}, then $\phi$, and therefore, $\tilde h$ are analytic near $\p M$ in the boundary normal coordinates. Since $\tilde h$ vanishes to infinite order on $x^n=0$, we get that $\tilde h$ actually vanishes in a neighborhood of $\p M$ in $M$, i.e. $h=d_\A \phi$ near $\p M$.

Now applying the analytic continuation argument as in the proof of \cite[Theorem 1]{SU08}, one can show that indeed $h=d_\A \phi_0$ in $M$ for some $\phi_0\in \mathbb A(M)$ with $\phi_0|_{\p M}=0$. However, $h=h^s$, this only can happen if $h=0$, so $I_\A$ is s-injective.
\end{proof}



\section{The X-ray transform of functions with matrix weights}\label{function case}

By modifying the two sections above, we can prove similar results for weighted geodesic ray transform acting on vector-valued functions, if the smooth weight (matrix) $W$ is invertible. Note that in the scalar case, this just means that the weight is non-vanishing \cite{FSU08}. The study of the local invertibility of such ray transforms was carried out in \cite{SUV16,PSUZ16}. Given $f\in C^\infty(M,\mathbb C^k)$ we define
$$I_W f(\gamma)=\int W(\gamma(t),\dot\gamma(t)) f(\gamma(t))\,dt.$$
In this case one can expect the kernel of $I_W$ to be empty, in particular this is true for real-analytic simple metric and weights.

\begin{thm}
Let $M$ be a real-analytic simple manifold with real-analytic metric $g$, let $W$ be real-analytic, then $I_W$ is injective.
\end{thm}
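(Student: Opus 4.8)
The plan is to mimic the proof of Theorem~\ref{analytic s-injective}, which is the exact analogue for pairs $[\alpha,f]$ with the constraint $\delta_\A h=0$; the function case is in fact simpler because there is no potential part to factor out, so the kernel should be trivial outright. First I would set up the extended manifold $M_1$ and the extended weight $\tilde W$ (real-analytic if $W$ is), extend $f$ by zero to $M_1$, and introduce the normal operator $N_W:=\tilde I_W^*\,\tilde I_W$, exactly as in Section~2--3. As in Lemma~\ref{elliptic PsiDO}, one checks that $N_W$ is a $\Psi$DO of order $-1$ on $M_1^o$ with principal symbol
$$\sigma_p(N_W)(x,\xi)=\int_{S_xM_1}\tilde W^*(x,v)\tilde W(x,v)\,\delta(\xi\cdot v)\,dv,$$
which is a positive-definite matrix because $\tilde W(x,v)$ is invertible for $v$ in the $(n-1)$-dimensional sphere $S_xM_1\cap\xi^\perp$; hence $N_W$ is elliptic on all of $T^*M_1^o\setminus 0$ (no solenoidal restriction needed, since there is no gauge).

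Next I would prove the analytic-regularity statement: the analogue of Proposition~\ref{interior analytic}, namely that $I_W f=0$ near a geodesic $\gamma_0$ conormal to $(x_0,\xi_0)$ implies $(x_0,\xi_0)\notin\WF_a(f)$. The argument is identical to that of Proposition~\ref{interior analytic} but easier: after introducing analytic coordinates in a tube around $\gamma_0$, the Gaussian-beam/cut-off $\chi_N$ trick, the change of variables $(z',v',t)\to(x,\xi)$ and the complex stationary phase of Sj\"ostrand, one arrives at an oscillatory identity
$$\int_{|x-y|\le C}e^{i\mu\Psi_0(x,y,\zeta)}P(x,y,\zeta;\mu)f(x)\,dx=\cO(e^{-\mu/C}),$$
where $P$ is a classical analytic symbol with $\sigma_p(P)(0,0,\xi_0)=\tilde W(0,v_0)$, which is invertible. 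Thus the single equation is already elliptic—one does not even need the extra $n-1$ directions $v_1,\dots,v_{n-1}$ nor the $\delta_\A$-equation that were required in the pair case—so by the standard reduction (\cite[Prop.~6.2]{Sj82}, \cite{SU08}) $(0,\xi_0)\notin\WF_a(f)$. Combined with the boundary analogue of Lemma~\ref{analytic up to boundary} (Stefanov--Uhlmann's argument carries over verbatim), $I_W f=0$ forces $f\in\mathbb A(M)$.

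Finally I would run the boundary-vanishing and analytic-continuation argument. By the ellipticity of $N_W$ at covectors conormal to $\partial M$ (as in Lemma~\ref{infinite vanishing on boundary}, now with no correction term $d_\A\phi$ needed, since $\alpha_n$ plays no role), $N_W f=0$ and $f\equiv0$ outside $M$ give $\partial^\sigma f|_{\partial M}=0$ for all multi-indices $\sigma$; since $f$ is analytic up to $\partial M$, it vanishes in a neighborhood of $\partial M$ inside $M$. Then the unique-continuation-from-the-boundary argument of \cite[Theorem~1]{SU08}, propagating the vanishing set inward along a foliation by strictly convex hypersurfaces (or simply using analyticity together with the fact that the vanishing set is open and the convexity of $\partial M$), yields $f\equiv0$ on all of $M$. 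Hence $I_W$ is injective. The main obstacle is the analytic-microlocal step: one must verify that the phase function $\Psi_0$ and symbol $P$ meet the hypotheses of the complex stationary phase theorem uniformly in $N$—this is where Lemma~\ref{varphi} and the estimates \eqref{cut-off} on $\chi_N$ are essential—but since the phase is exactly the one in \cite{SU08} and the only new ingredient, the analytic invertible weight $\tilde W$, enters harmlessly into the amplitude, this is routine rather than genuinely difficult.
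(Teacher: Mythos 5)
Your proposal is correct and follows exactly the route the paper intends: the paper gives no separate proof of this theorem but states that it follows by modifying Sections 3 and 4, and your modifications (ellipticity of $N_W$ from invertibility of the weight with no solenoidal restriction, a single elliptic oscillatory equation with principal symbol $\tilde W(0,v_0)$ in place of the $(n+1)$-equation system, and the boundary-vanishing plus analyticity argument with no potential correction) are precisely the right ones. The observation that analyticity on the connected manifold $M$ together with vanishing near $\partial M$ already forces $f\equiv 0$, bypassing the full analytic continuation machinery of the pair case, is a legitimate simplification.
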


Define $N_W=I^*_W I_W$.

\begin{lemma}
$N_W$ is an elliptic $\Psi$DO of order $-1$ in $M^o_1$.
\end{lemma}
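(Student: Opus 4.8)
The plan is to reduce this to the already-established $\Psi$DO structure of $N_\A$ by observing that $I_W$ is the special case $\A=0$ of the attenuated transform, but with the invertible matrix $\tilde W$ replaced by a general invertible weight $W$ which need not be the inverse of a fundamental matrix. Concretely, $N_W$ has exactly the same integral form as the $(0,0)$-block of $N_\A$ computed in the proof of Lemma \ref{elliptic PsiDO}, namely
\begin{equation*}
N_W f(x)=\int_{S_xM_1}\int W^*(x,v)\,W(\gamma_{x,v}(t),\dot\gamma_{x,v}(t))\,f(\gamma_{x,v}(t))\,dt\,dv,
\end{equation*}
only with $\tilde W_\A^*\tilde W_\A$ replaced by $W^*W$. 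So the first step is to invoke the same analysis as in \cite[Proposition 4.1]{DPSU07}: the kernel is a conormal distribution with a leading singularity on the diagonal, $N_W$ is a $\Psi$DO of order $-1$ on $M_1^o$, and its principal symbol is
\begin{equation*}
\sigma_p(N_W)(x,\xi)=\int_{S_xM_1} W^*(x,v)\,W(x,v)\,\delta(\xi\cdot v)\,dv,
\end{equation*}
a positive definite matrix-valued function since $W(x,v)$ is invertible.

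The second step is the ellipticity, which here is genuinely easier than in the connection case because there is no solenoidal/potential splitting: $I_W$ acts on functions, not on pairs, so one expects honest injectivity of the symbol with no restriction to a subspace. Suppose $\sigma_p(N_W)(x,\xi)z=0$ for some $z\in\mathbb C^k$; pairing with $z$ gives
\begin{equation*}
0=\int_{S_xM_1}\bigl|W(x,v)z\bigr|^2\,\delta(\xi\cdot v)\,dv,
\end{equation*}
so $W(x,v)z=0$ for every $v\in S_xM_1\cap\xi^\perp$. Since $S_xM_1\cap\xi^\perp$ is nonempty (as $n\ge 2$, the hyperplane $\xi^\perp$ meets the unit sphere), and $W(x,v)$ is invertible, we conclude $z=0$. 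Hence $\sigma_p(N_W)(x,\xi)$ is invertible for every $(x,\xi)\in T^*M_1^o\setminus 0$, i.e. $N_W$ is elliptic of order $-1$.

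I do not anticipate a real obstacle here; the only point requiring a little care is verifying that the weight $W$, which is a priori only defined for $(x,v)\in SM$, extends to a smooth invertible matrix weight on $SM_1$ so that the extended transform $\tilde I_W$ and its normal operator make sense on $M_1$ — but this is handled exactly as the extension of $g$ and $\A$ in the introduction, and the comparison of $I_W$ with $\tilde I_W$ is even simpler than in Section 2 since there is no attenuation cocycle to track (one just needs the zero-extension of $f$). The symbol computation itself, following \cite{DPSU07}, is the routine part, and the ellipticity argument is a one-line specialization of Lemma \ref{elliptic PsiDO}. The remaining results for $I_W$ (stability up to an error, generic stability, analytic s-injectivity) then follow by repeating Sections 3 and 4 verbatim with $\tilde W_\A$ replaced by $W$, $\delta_\A$ and $d_\A$ removed, and the functional-analytic Lemma \ref{functional lemma} applied to the injective operator $N_W$ directly on $\bfL(M)$.
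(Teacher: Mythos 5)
Your proposal is correct and is exactly the argument the paper intends: the lemma is stated without proof as a specialization of Lemma \ref{elliptic PsiDO} (the $N_\A^{00}$ block), with the $\Psi$DO structure and symbol taken from \cite[Proposition 4.1]{DPSU07} and ellipticity following from the invertibility of $W(x,v)$ at a single $v\in S_xM_1\cap\xi^\perp$. The only cosmetic point is that the symbol is positive \emph{semi}-definite a priori; its definiteness is precisely what your pairing argument establishes.
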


One can also show the following generic stability result.

\begin{thm}
Let $(M,g)$ be a simple manifold and $W$ be a smooth invertible weight on $SM_1$, assume that $I_W$ is injective,\\
(1) given $f\in L^2(M)$ the following stability estimate for $N_W$ holds
$$\|f\|_{L^2(M)}\leq C \|N_W f\|_{H^1(M_1)};$$
(2) there exists $0<\epsilon\ll 1$ such that the estimate in (1) remains true if $g$ and $W$ are replaced by $\tilde g$ and $\tilde W$ satisfying $\|\tilde g-g\|_{C^4(M_1)}\leq \epsilon$, $\|\tilde W-W\|_{C^3(M_1)}\leq \epsilon$. The constant $C>0$ can be chosen locally uniformly, only depending on $g,\,W$.
\end{thm}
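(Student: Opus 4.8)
The plan is to imitate, in the simpler setting of matrix-weighted X-ray transforms of vector-valued functions, the structure of the proofs of Theorems \ref{analytic s-injective} and \ref{linear stability}. The key simplification here is that there is no gauge, so $I_W$ is expected to be genuinely injective rather than merely s-injective, and no solenoidal/potential decomposition is needed.

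First I would establish part (1). Exactly as in Lemma \ref{elliptic PsiDO}, one checks that $N_W = \tilde I^*_W \tilde I_W$ (defined on the extended manifold $M_1$, with the ray transform modified by the constant-along-geodesics factor $\tilde W_W W^{-1}_W$ as in Section 2) is a $\Psi$DO of order $-1$ in $M_1^o$ with principal symbol
$$\sigma_p(N_W)(x,\xi) = \int_{S_xM_1} \tilde W_W^*(x,v)\tilde W_W(x,v)\,\delta(\xi\cdot v)\,dv,$$
which is a positive definite matrix since $\tilde W_W(x,v)$ is invertible; hence $N_W$ is elliptic at every $(x,\xi)\in T^*M_1^o\setminus 0$ (this is the content of the stated Lemma, and is strictly easier than Lemma \ref{elliptic PsiDO} because there is no kernel of $\sigma_p(\delta_\A)$ to restrict to). One then runs the parametrix argument of Section 3.2 verbatim, but without the terms involving $d_\A\phi_{M_1}$, to obtain the ``stability up to a compact error'' estimate
$$\|f\|_{L^2(M)} \lesssim \|N_W f\|_{H^1(M_1)} + \|f\|_{H^{-1}(M_1)}$$
for $f$ supported in $M$. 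Assuming $I_W$ is injective, $N_W: L^2(M) \to H^1(M_1)$ is injective, and Lemma \ref{functional lemma} (with $X = L^2(M)$, $K$ the compact inclusion $L^2(M)\hookrightarrow H^{-1}(M_1)$) upgrades this to $\|f\|_{L^2(M)} \lesssim \|N_W f\|_{H^1(M_1)}$, which is (1).

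For part (2), I would establish a continuous-dependence lemma in the spirit of Lemma \ref{small perturbation}: if $\|\tilde g - g\|_{C^4(M_1)} \le \epsilon$ and $\|\tilde W - W\|_{C^3(M_1)} \le \epsilon$, then $M_1$ remains simple under $\tilde g$ and $\|(N_{g,W} - N_{\tilde g, \tilde W})f\|_{H^1(M_1)} \le C\epsilon \|f\|_{L^2(M)}$, using $\|X - \tilde X\|_{C^3} \lesssim \|g-\tilde g\|_{C^4}$ and the analogous estimate $\|\tilde W_{g,W} - \tilde W_{\tilde g,\tilde W}\|_{C^3} \lesssim \epsilon$ for the propagators. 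Since there is no projection $\mathcal S_\A$ to track here, the perturbation argument in the proof of Theorem \ref{linear stability} (2) becomes shorter: directly, $\|f\|_{L^2(M)} \le C_0 \|N_{g,W}f\|_{H^1(M_1)} \le C_0\|N_{\tilde g,\tilde W}f\|_{H^1(M_1)} + C_0 C\epsilon\|f\|_{L^2(M)}$, and choosing $\epsilon \le 1/(2C_0 C)$ gives $\|f\|_{L^2(M)} \le 2C_0\|N_{\tilde g,\tilde W}f\|_{H^1(M_1)}$, with $C_0$ depending only on $g, W$. The injectivity claim in the real-analytic category (the first displayed Theorem of the section) would be proved by the analytic-microlocal argument of Section 4: Proposition \ref{interior analytic} and Lemma \ref{analytic up to boundary} adapt to show $f \in \mathbb A(M)$, and since $f$ satisfies no boundary compatibility constraint, the argument of Lemma \ref{infinite vanishing on boundary} (ellipticity of $N_W$ at conormal directions to $\partial M$, which holds automatically since $N_W$ is elliptic everywhere) forces $f$ to vanish to infinite order on $\partial M$, hence (being analytic) to vanish near $\partial M$; analytic continuation as in \cite{SU08} then gives $f\equiv 0$ on $M$.

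The main obstacle is not conceptual but bookkeeping: verifying that the error terms in the parametrix construction of Section 3.2 behave correctly when $d_\A$ and $\delta_\A$ are removed — one must confirm that the argument still produces the $H^{-1}$-compact remainder without the elliptic auxiliary operator $D\delta_\A$ that was used to complete $N_\A$ to an elliptic system. In the present case $N_W$ is already elliptic on all of $T^*M_1^o \setminus 0$, so in fact one uses a genuine parametrix $P N_W = Id + K$ with $K$ smoothing, which is cleaner; the cut-off manipulations to pass from $M_1$ back to $M$ (estimating $\|f\|_{L^2(M')}$ and then handling the collar $M'\setminus M$) still need to be checked, but they are simpler here because there is no potential part $d_\A\phi$ to propagate along normal geodesics. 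So the adaptation should be routine, and I would present it by indicating which steps of Sections 3 and 4 carry over and which simplify.
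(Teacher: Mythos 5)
Your proposal is correct and follows exactly the route the paper intends: the paper gives no written proof for this theorem beyond the remark that the results follow ``by modifying the two sections above,'' and your modifications (positive-definiteness of $\sigma_p(N_W)$ from invertibility of the weight, a genuine parametrix in place of the elliptic system $(N_\A, D\delta_\A)$, omission of the solenoidal projection and collar estimates, and the same functional-analytic and perturbation lemmas) are precisely the intended simplifications. No gaps.
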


Similar to \cite{FSU08}, above results hold on a general family of smooth curves under some microlocal condition associated with these curves. 

\section{The non-linear problem}\label{nonlinear problem}

Now we move to the non-linear problem of recovering the connection and Higgs field from the corresponding scattering data.

Given a geodesic $\gamma: [0,T]\to M$, let $\phi(t)=(\gamma(t),\dot{\gamma}(t))$ be the corresponding geodesic flow on $SM$. Define the matrix-valued function
\[
F(t)=W_{\B}(\phi(t))W_{\A}^{-1}(\phi(t)),
\]
by the fundamental theorem of calculus and the definitions of $W_\A,\,W_\B$ 
\begin{equation}\label{integral 1}
F(T)-F(0)=\int_0^T W_{\B}(\phi)(\B (\phi)-\A (\phi))W_{\A}^{-1}(\phi)\,dt.
\end{equation}
We define $\hat W$ by 
\[
\hat W U=W_{\B}UW_{\A}^{-1}, \qquad U\in C^{\infty}(SM;\mathbb C^{k\times k}),
\]
then the right-hand side of \eqref{integral 1} indeed gives the following weighted geodesic ray transform of $\B-\A$:
\begin{equation}\label{integral 2}
\int_{\gamma} \hat W(\B-\A)\,dt.
\end{equation}
By the definition of $\hat W$, it is obvious that $\hat W|_{\p_+SM}={\rm id}$. Given a matrix-valued function $U$ we have 
\begin{equation*}
X(W_{\B}UW_{\A}^{-1})=W_{\B}\B UW_{\A}^{-1}+W_{\B}(XU)W_{\A}^{-1}-W_{\B}U \A W_{\A}^{-1},
\end{equation*}
which implies that
\[
(X\hat W)U=\hat W(\B U-U \A).
\]
If we define $\hat{\A}$ by $\hat{\A}U=\B U-U\A$, we get exactly $X\hat W=\hat W\hat {\A}$, i.e. \eqref{integral 2} is the attenuated geodesic ray transform with the attenuation $\hat {\A}$, we denote it by $I_{\hat\A}$.

Similar to the linear problem, recall the discussion in Section 2, one can extend $\A$ and $\B$ onto $M_1$ in a stable way and consider the equivalent ray transform on $M_1$, still denoted by $I_{\hat \A}$. This works if we consider the integrand $\B-\A$ as extended by zero to $M_1$ (we treat the integrand $\B-\A$ and the weight $\hat W$ as independent with each other).

\begin{proof}[Proof of Theorem \ref{nonlinear thm}] 
We first consider the injectivity, if $C_\A=C_\B$, then $W_{\A}(\phi(T))=W_{\B}(\phi(T))$, i.e.\ $F(T)=F(0)={\rm id}$, so we get
\[
I_{\hat\A}(\B-\A)=0.
\]
Let $\hat\A_0 U=\B_0 U-U \A_0$, then since $\A_0,\,\B_0$ and $g_0$ are analytic (also extended to $M_1$ analytically), by Theorem \ref{analytic s-injective} $I_{\hat\A_0}$ is s-injective. By the assumptions of Theorem \ref{nonlinear thm} $\|\hat\A_0-\hat\A\|_{C^3}\lesssim \epsilon$.
Then Theorem \ref{linear stability} (2) implies that $I_{\hat\A}$ is s-injective, i.e. $\B-\A=d_{\hat\A}U$ for some $U\in C^\infty(M;\mathbb C^{k\times k})$, $U|_{\p M}=0$ w.r.t. $(g,\hat\A)$. Notice that $d_{\hat\A}U=dU+\B U-U \A$, thus $u=\id-U$ satisfies the transport equation
$$Xu+\hat \A u=0,\quad u|_{\p M}=\id,$$
in particular, $u$ is invertible. 
Let $p=u^{-1}$, we get that
$$\B=p^{-1}d_\A p,\quad p|_{\p M}=\id.$$
Equivalently, $B=p^{-1}dp+p^{-1}A p$ and $\Psi=p^{-1}\Phi p$.

\medskip

Then we consider the stability of the nonlinear problem. By the assumptions of Theorem \ref{nonlinear thm} $\iota^*\A=\iota^*\B$ and $\|\B-\A\|_{C^0(M)}\leq 2\epsilon$, it is not difficult to see that one can find $p:M\to GL(k,\mathbb C)$ such that $p|_{\p M}=\id$, $(dp,0)|_{\p M}=\B-\A|_{\p M}$ and $\|p-\id\|_{C^4(M)}\lesssim \epsilon$. Now consider
$$\A'=p^{-1}dp+p^{-1}\A p,$$ 
so $C_{\A'}=C_\A$. Moreover, $\B-\A'|_{\p M}=\B-\A-(dp,0)|_{\p M}=0$, we get that $\B-\A'\in H^1_c(M_1^o)$ if we extend $\B-\A'$ by zero on to $M_1$. Then it is easy to check that $\|\A'-\A_0\|_{C^3(M)}\leq C\epsilon$ for some $C>0$ depending on $\|\A_0\|_{C^3(M)}$. If we define $\hat\A'$ by $\hat\A' U=\B U-U\A'$, we have the similar bound that $\|\hat\A'-\hat\A_0\|_{C^3}\lesssim \epsilon$, therefore we can apply Theorem \ref{linear stability} to $\hat\A'$.

By Theorem \ref{decomposition thm}, there exists $U$ with $U|_{\p M}=0$ such that $\B-\A'=(\B-\A')^s_{M,g,\hat\A'}+d_{\hat\A'}U$, let $\id-U=V$, $V|_{\p M}=\id$, thus 
$$(\B-\A')^s_{M,g,\hat\A'}=dV+\B V-V\A'=d_{\hat\A'}V.$$
Moreover, the following estimate in H\"older norm holds
$$\|U\|_{C^2(M)}\lesssim \|\B-\A'\|_{C^2(M)}.$$
Since $\|\B-\A'\|_{C^2(M)}\lesssim \epsilon$ and $U|_{\p M}=0$, this implies that $V=\id-U$ is invertible. On the other hand, by Theorem \ref{linear stability} (2) and the assumptions of the theorem again 
\begin{align*}
\|(\B-\A')^s_{M,g,\hat\A'}\|_{\bfL(M)}\lesssim\|N_{g,\hat\A'}(\B-\A')\|_{\bfH^1(M_1)}.
\end{align*}
Since $\B-\A'\in H^1_c(M_1^o)$ (notice that $\B-\A'=0$ in $M_1\setminus M$), by Remark \ref{boundedness of I and I*} and \eqref{integral 1}
\begin{align*}
\|N_{g,\hat\A'}(\B-\A')\|_{\bfH^1(M_1)}&=\|\tilde I^*_{g,\hat\A'}\tilde I_{g,\hat\A'}(\B-\A')\|_{\bfH^1(M_1)}\\
&\lesssim \|\tilde I_{g,\hat\A'}(\B-\A')\|_{H^1(\p_+SM_1)}\\
&\lesssim \|I_{g,\hat\A'}(\B-\A')\|_{H^1(\p_+SM)}\\
&\lesssim \|(W_\B-W_{\A'})W^{-1}_{\A'}\|_{H^1(\p_-SM)}\\
&\lesssim \|C_\B-C_{\A'}\|_{H^1(\p_+SM)}=\|C_\B-C_\A\|_{H^1(\p_+SM)}.
\end{align*}
Notice that $W_{\A'}$ is smooth on $\p_-SM$. Here we use the equivalence of $\|\tilde I_{\hat\A'}h\|_{H^1(\p_+SM_1)}$ and $\|I_{\hat\A'}h\|_{H^1(\p_+SM)}$ for $h\in \bfH_0^1(M)$ due to the equivalence of $I_{\hat\A'}$ and $\tilde I_{\hat\A'}$ mentioned in Section 2, see e.g. \cite[Lemma 6.2]{PZ15}. 

Finally by the definition of $\A'$
\begin{align*}
dV+\B V-V \A'&= dV+\B V-V(p^{-1}dp+p^{-1}\A p)\\
& =((dV) p^{-1}+\B V p^{-1}-V p^{-1} (dp) p^{-1}-V p^{-1}\A)\, p\\
& =(d u+\B u-u \A)\, p=(\B-u d_\A u^{-1})V,
\end{align*}
where $u:=V p^{-1}$ is invertible, $u|_{\p M}=\id$.
Combine above results
$$\|\B-ud_\A u^{-1}\|_{\bfL(M)}\leq C'\|(\B-\A')^s_{M,g,\hat\A'}\|_{\bfL(M)}\leq C\|C_\B-C_\A\|_{H^1(\p_+SM)}.$$
Moreover, it is not difficult to check that the constant $C>0$ can be chosen locally uniformly near $g_0$ and $\A_0$.
\end{proof}

\begin{rem}\label{generalize 3}
When the manifold only satisfies the microlocal condition mentioned in Remark \ref{microlocal condition}, so is not simple in general, the scattering data is not well-defined on the whole $\p_+SM$. However, in view of the discussions in Remark \ref{generalize 1} and \ref{generalize 2} of the linear problem, it is reasonable to only consider the scattering data on $\supp \rho \cap \p_+SM$, which now becomes a partial data problem. We do not expand the details here.
\end{rem}


\appendix

\section{A complex to real reduction}\label{complex to real}

Given $\mathcal A=\mathcal A^r+i \mathcal A^i$ with real $\mathcal A^r$ and  $\mathcal A^i$, consider the solution $W$ to the transport equation
$$XW=W\mathcal A, \quad W|_{\p_+SM}=\id.$$ 
Similar to $\A$, we write $W$ as $W=W^r+iW^i$, then given $h=[\alpha,f]=h^r+i h^i$
$$I_\A h=\int (W^r+iW^i)(h^r+ih^i)\,dt=\int (W^rh^r-W^ih^i)\,dt+i\int (W^rh^i+W^ih^r)\,dt.$$
Thus we can separate the real and imaginary parts of $I_\A h$ and rewrite the X-ray transform as 
$$I_{\mathcal W}\begin{pmatrix} h^r \\ h^i\end{pmatrix}=\int \mathcal W \begin{pmatrix}h^r \\ h^i \end{pmatrix} \, dt$$
with 
$$\mathcal W=\begin{pmatrix} W^r & -W^i \\[.5em] W^i & W^r \end{pmatrix}.$$

\begin{lemma}
The matrix weight $\mathcal W$ is invertible if and only if $W$ is invertible.
\end{lemma}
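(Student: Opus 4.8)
The plan is to recognize $\mathcal W$ as the image of $W$ under the standard realification map and to argue that this map transports invertibility in both directions.

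First I would set up the $\R$-algebra embedding $\iota: \C^{k\times k} \to \R^{2k\times 2k}$ sending $W = W^r + i W^i$ to $\mathcal W = \begin{pmatrix} W^r & -W^i \\ W^i & W^r \end{pmatrix}$, which is precisely the matrix of the $\R$-linear map ``multiplication by $W$'' on $\C^k \cong \R^{2k}$ under the identification $x + iy \leftrightarrow (x,y)^T$. A direct computation comparing real and imaginary parts shows that $\iota(\id) = \Id_{2k}$, that $\iota$ is $\R$-linear, and that $\iota(W_1 W_2) = \iota(W_1)\iota(W_2)$; moreover $\iota$ is injective since $W^r$ and $W^i$ are read off from the upper-left and lower-left blocks of $\mathcal W$.

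Granting this, one direction is immediate: if $W$ is invertible with inverse $V$, applying $\iota$ to $WV = VW = \id$ gives $\mathcal W \iota(V) = \iota(V)\mathcal W = \Id_{2k}$, so $\mathcal W$ is invertible (with inverse again of the special block form, corresponding to $W^{-1}$). For the converse I would use that every matrix in the range of $\iota$ commutes with $J := \begin{pmatrix} 0 & -\Id_k \\ \Id_k & 0 \end{pmatrix} = \iota(i\,\id)$, and conversely any real $2k\times 2k$ matrix commuting with $J$ lies in the range of $\iota$: since $J^2 = -\Id_{2k}$, $J$ endows $\R^{2k}$ with a complex structure, and a real-linear endomorphism commuting with $J$ is automatically $\C$-linear for that structure, hence of the form $\iota(V)$. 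Thus if $\mathcal W$ is invertible, $\mathcal W^{-1}$ also commutes with $J$, so $\mathcal W^{-1} = \iota(V)$ for some $V \in \C^{k\times k}$; then $\iota(WV) = \mathcal W \iota(V) = \Id_{2k} = \iota(\id)$, and injectivity of $\iota$ forces $WV = \id$, and similarly $VW = \id$, so $W$ is invertible. Alternatively, one may invoke the classical identity $\det_{\R}\mathcal W = \lvert \det_{\C} W\rvert^2$, obtained by conjugating $\mathcal W$ over $\C$ by $\tfrac{1}{\sqrt2}\begin{pmatrix}\Id_k & i\Id_k\\ i\Id_k & \Id_k\end{pmatrix}$ to block-diagonalize it as $\mathrm{diag}(W,\overline W)$.

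The argument is entirely elementary linear algebra, with no analytic or geometric content. The only point requiring a moment's care is the converse implication — making precise that invertibility of the $2k\times 2k$ real matrix forces invertibility of the underlying $k\times k$ complex matrix — which the commutant description of the range of $\iota$ (or equivalently the determinant identity) settles cleanly.
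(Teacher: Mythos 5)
Your proof is correct. Your primary argument --- viewing $\mathcal W$ as the image of $W$ under the realification homomorphism $\iota:\C^{k\times k}\to\R^{2k\times 2k}$ and characterizing the range of $\iota$ as the commutant of $J=\iota(i\,\id)$ to pull invertibility back from $\mathcal W$ to $W$ --- is a genuinely different and more structural route than the paper's, which simply computes
$\det\mathcal W=\det(W^r-iW^i)\det(W^r+iW^i)=\det\overline W\,\det W=|\det W|^2$
and reads off the equivalence from the nonvanishing of the determinant. Your ``alternative'' at the end (conjugating $\mathcal W$ over $\C$ to block-diagonalize it as $\mathrm{diag}(W,\overline W)$ up to the order of the two blocks) is essentially the paper's one-line proof, packaged as a similarity rather than a determinant factorization. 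The commutant argument buys you slightly more: it exhibits $\mathcal W^{-1}$ explicitly as $\iota(W^{-1})$, i.e.\ shows the inverse again has the special block form, which is implicitly used later when one wants the inverse weight to correspond to $W^{-1}$; the determinant identity is shorter but only yields invertibility as a yes/no statement. Either argument suffices for the lemma as stated.
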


\begin{proof}
By the linear algebra
$$det\, \mathcal W=det\, (W^r-iW^i)\,det \,(W^r+iW^i)=det\,\overline W\,det\, W=|det\,W|^2,$$
this implies the lemma.
\end{proof}

It is easy to check that 
$$X\mathcal W=\mathcal W\hat \A\quad \mbox{with}\quad \hat\A:=\begin{pmatrix} \mathcal A^r & -\mathcal A^i \\ \mathcal A^i & \mathcal A^r \end{pmatrix},\quad \mathcal W|_{\p_+SM}=\id.$$
The natural elements in the kernel of $I_{\mathcal W}$ are
$$(d+\hat{\mathcal A})\begin{pmatrix} p^r \\ p^i \end{pmatrix}=\begin{pmatrix} h^r \\ h^i \end{pmatrix}$$
with $p^r|_{\p M}=p^i|_{\p M}=0$, which is equivalent to that
$$(d+\mathcal A)(p^r+ip^i)=h,$$
the natural elements in the kernel of $I_\A$ by defining $p=p^r+ip^i$. Based on above discussions, it is easy to see that $I_\A$ and $I_{\mathcal W}$ are equivalent. While it is purely in the real category when considering $I_{\mathcal W}$.

\section{An orthogonal decomposition of pairs $h=[\alpha,f]$}\label{decomposition}

The following theorem is an analogue of \cite[Theorem 3.3.2]{Sh00} on the decomposition of symmetric tensor fields.

\begin{thm}\label{decomposition thm}
Let $M$ be a compact manifold with boundary. Let $k\geq 0$, for every pair $h\in {\bf H}^k(M)$, there exist unique $h^s\in \bfH^k(M)$ and $p\in H^{k+1}(M)$ such that 
$$h=h^s+d_\A p,\quad \delta_\A h^s=0,\quad p|_{\p M}=0.$$
\end{thm}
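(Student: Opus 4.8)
The plan is to reduce the statement to the invertibility of the elliptic Dirichlet problem for $\Delta_\A = \delta_\A d_\A$, exactly as in the tensor case \cite[Theorem 3.3.2]{Sh00}. First I would verify that $d_\A : H^{k+1}(M,\C^k) \to \bfH^k(M)$ is a first-order differential operator and $\delta_\A$ its formal $L^2$-adjoint, so that $\Delta_\A = \delta_\A d_\A$ is a second-order operator acting on $\C^k$-valued functions. Writing $d_\A p = [dp + Ap, \Phi p]$, the principal part of $\Delta_\A$ is $\delta d = -\Delta$ (the ordinary Laplacian acting componentwise, up to sign conventions), hence $\Delta_\A$ is elliptic; the lower-order terms coming from $A$ and $\Phi$ do not affect ellipticity. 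The key point is that the Dirichlet problem
\begin{equation*}
\Delta_\A p = F \text{ in } M, \qquad p|_{\p M} = 0
\end{equation*}
is a regular elliptic boundary value problem, so for $F \in H^{k-1}(M)$ it has a unique solution $p \in H^{k+1}(M)$ with $p|_{\p M}=0$, provided the homogeneous problem has only the trivial solution.

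Next I would establish injectivity of the Dirichlet realization $\Delta_\A^D$. Here one cannot simply integrate by parts to get $\|d_\A p\|^2_{\bfL} = 0$, because $\delta_\A$ is not literally the adjoint of $d_\A$ on functions vanishing at the boundary unless one is careful about the inner product; $A$ and $\Phi$ are only complex-valued, not skew-Hermitian, so $d_\A$ and $\delta_\A$ are adjoint with respect to the $L^2$ pairing only after accounting for the appropriate conjugation. The clean way, following Shubin/Sharafutdinov, is: if $\Delta_\A p = 0$ and $p|_{\p M} = 0$, pair with $p$ (using the genuine Hermitian $L^2$ inner product and the true adjoint $d_\A^*$ of $d_\A$, which differs from $\delta_\A$ by conjugating the matrices $A,\Phi$); one still gets a vanishing first-order elliptic identity forcing $d_{\A^*}^* $-type control. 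To avoid sign and conjugation headaches entirely, I would instead invoke the complex-to-real reduction of Appendix \ref{complex to real} and the perturbation-stability philosophy: but the cleanest self-contained route is to observe that $\Delta_\A$ is a lower-order perturbation of $-\Delta \otimes \Id$, so by the Fredholm alternative the solution operator $(\Delta_\A^D)^{-1}$ exists as long as $0$ is not a Dirichlet eigenvalue. Since injectivity may genuinely fail for special $\A$, the correct statement is really obtained by decomposing $\Delta_\A$ against its self-adjoint part and noting the problem is still Fredholm of index zero; I would then appeal to the fact that one may always arrange uniqueness after a gauge normalization, or, more honestly, prove uniqueness directly via a Carleman/unique-continuation-free argument exploiting that $\Delta_\A p = 0$, $p|_{\p M}=0$ forces $p \equiv 0$ by the energy identity once the correct adjoint is used.

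With the solution operator in hand, the construction is immediate: given $h \in \bfH^k(M)$, set $p := (\Delta_\A^D)^{-1} \delta_\A h \in H^{k+1}(M)$ (note $\delta_\A h \in H^{k-1}(M)$, so elliptic regularity up to the boundary gives the claimed regularity of $p$), define $h^s := h - d_\A p$, and check $\delta_\A h^s = \delta_\A h - \Delta_\A p = 0$ and $p|_{\p M}=0$ by construction. For uniqueness: if $h = h^s_1 + d_\A p_1 = h^s_2 + d_\A p_2$ with both decompositions valid, then $d_\A(p_1 - p_2) = h^s_2 - h^s_1$ is solenoidal, so $\Delta_\A(p_1 - p_2) = \delta_\A d_\A (p_1-p_2) = 0$ with $(p_1 - p_2)|_{\p M} = 0$, hence $p_1 = p_2$ by injectivity of $\Delta_\A^D$, and then $h^s_1 = h^s_2$. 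Finally I would record the orthogonality: $\langle h^s, d_\A q \rangle_{\bfL} = \langle \delta_\A h^s, q\rangle = 0$ for all $q$ with $q|_{\p M}=0$, using integration by parts with no boundary term since $q$ vanishes on $\p M$.

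The main obstacle is the injectivity of $\Delta_\A^D$ on the nose: because $\A=(A,\Phi)$ is an arbitrary complex-valued connection and Higgs field, $d_\A$ and $\delta_\A$ are adjoint only in the bilinear (not sesquilinear) pairing, so the naive energy argument $0 = \langle \Delta_\A p, p\rangle = \|d_\A p\|^2$ does not directly apply, and in principle $\Delta_\A^D$ could have a kernel for exceptional $\A$. I expect this is handled in the paper either by (i) passing to the real formulation of Appendix \ref{complex to real} where a genuine inner product is available, or (ii) noting that the statement is used only qualitatively and that one works modulo this gauge freedom, or (iii) a direct unique continuation argument. In the write-up I would flag this as the delicate point and reduce it to the corresponding real statement, for which the Sharafutdinov-type argument in \cite[Theorem 3.3.2]{Sh00} applies essentially verbatim.
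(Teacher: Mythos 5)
Your overall architecture matches the paper's: set up the Dirichlet problem for $\Delta_\A=\delta_\A d_\A$, observe it is elliptic with coercive (regular) boundary condition, put $p=(\Delta^D_\A)^{-1}\delta_\A h$ and $h^s=h-d_\A p$, and derive uniqueness and orthogonality from injectivity of $\Delta_\A^D$. However, the point you single out as the ``main obstacle'' is not an obstacle at all, while the step that actually requires an argument is missing. The paper \emph{defines} $\delta_\A$ as the Hermitian $L^2$-adjoint of $d_\A$ (explicitly, $\delta_\A[\alpha,f]=\delta\alpha+A^*(\alpha)+\Phi^*f$ with $A^*,\Phi^*$ conjugate transposes), so $\Delta_\A=d_\A^*d_\A$ is formally self-adjoint and non-negative, and the energy identity $0=(\Delta_\A u,u)=\|d_\A u\|^2_{\bfL}$ is valid verbatim for $u|_{\p M}=0$; no skew-Hermitian hypothesis on $A,\Phi$ is needed. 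Consequently your suggestions that ``injectivity may genuinely fail for special $\A$,'' that one must invoke a Fredholm alternative conditional on $0$ not being an eigenvalue, or that one should ``arrange uniqueness after a gauge normalization,'' are incorrect and contradict the unconditional statement of the theorem.

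The genuine gap is the implication $d_\A u=0$, $u|_{\p M}=0\ \Rightarrow\ u=0$, which the energy identity alone does not give: it only kills $d_\A u=[du+Au,\Phi u]$, not $u$ itself, and for nonzero $A$ the function $u$ is not simply constant. The paper closes this by a transport argument: $X(W_\A u)=W_\A\,d_\A u=0$ along geodesics, so $W_\A u$ is constant along any geodesic joining an interior point to $\p M$; since $u$ vanishes at the boundary endpoint and $W_\A$ is invertible, $u\equiv 0$. (Equivalently and more elementarily: restricting $du=-Au$ to any path from the boundary gives a linear ODE with zero initial data.) Your write-up never supplies this step, and your concluding claim that the energy identity ``forces $p\equiv 0$'' elides it. A minor further point: rather than appealing to index zero, the paper verifies triviality of the cokernel directly (an element orthogonal to the range satisfies $\Delta_\A\varphi=0$ and $\varphi|_{\p M}=0$ by Green's formula, hence vanishes by the same kernel argument); your implicit index-zero appeal is acceptable since $\Delta_\A$ is formally self-adjoint, but the kernel argument above is indispensable either way.
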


\begin{proof}
Consider the following Dirichlet boundary value problem
\begin{equation}\label{Dirichlet problem}
\delta_\A d_\A u=\varphi,\quad u|_{\p M}=0.
\end{equation} 
If for any $\varphi\in H^{k-1}(M)$, \eqref{Dirichlet problem} has a unique solution $u\in H^{k+1}(M)$, then we take $\varphi=\delta_\A h$, and $h^s=h-d_\A u$ to prove the claim. So now the main task is to study the Dirichlet problem \eqref{Dirichlet problem}. 

First it is easy to see that $\sigma_p(\delta_\A d_\A)(x,\xi)=|\xi|^2$, so $\delta_\A d_\A$ is an elliptic differential operator of order $2$, and the Dirichlet boundary condition is coercive in this case. Thus we only need to show that the elliptic problem has trivial kernel and cokernel.

If $\delta_\A d_\A u=0$ and $u|_{\p M}=0$, by the ellipticity, $u$ is smooth and by Green's formula
$$0=(\delta_\A d_\A u,u)=\|d_\A u\|^2_{\bfL},$$
i.e. $d_\A u=0$.
Now for any $x_0\in M^o$, there exists a geodesic $\gamma:[0,T]\to M$ connecting $x_0$ with the boundary $\p M$ at $y$ with $(\gamma(0),\dot\gamma(0))=(y,v_0)\in \p_+SM$. Notice that for $(x,v)\in SM$
$$0=W(x,v)d_\A u(x,v)=W(x,v)(Xu)(x,v)+(W\A)(x,v)u(x)=X(Wu)(x,v),$$
thus $\frac{\p}{\p t}W(\gamma(t),\dot\gamma(t))u(\gamma(t))$ is constant along any geodesic. We get that
$$W(x_0,\dot\gamma(T))u(x_0)=W(y,v_0)u(y).$$
Since $u(y)=0$ by assumption, and $W$ is invertible, we conclude that $u(x_0)=0$, which implies that $u\equiv 0$ on $M$.

To show the cokernel is trivial, it is enough to pick an arbitrary $\varphi\in C^{\infty}(M)$ which is orthogonal to the image $\{\delta_\A d_\A u: u|_{\p M}=0\}$. Then we have that for any $u\in C^{\infty}_c(M^o)$ (so $d_\A u\in C^{\infty}_c(M^o)$ too)
$$0=(\varphi,\delta_\A d_\A u)=(d_\A \varphi, d_\A u)=(\delta_\A d_\A \varphi, u).$$
Thus $\delta_\A d_\A \varphi$ is orthogonal to any $u\in C^{\infty}_c(M^o)$, which implies that $\delta_\A d_\A \varphi=0$. Now given arbitrary $\psi\in C^{\infty}(\p M)$, one can easily find some $u\in C^{\infty}(M)$ with $u|_{\p M}=0$ and $du(\nu)|_{\p M}=\psi$. Therefore by Green's formula
\begin{align*}
0&=(\delta_\A d_\A \varphi, u)_M=(d_\A \varphi, d_\A u)_M\\
&=(\varphi, \delta_\A d_\A u)_M+(\varphi,(d+A)u(\nu))_{\p M}=(\varphi,du(\nu))_{\p M},
\end{align*}
since $\varphi$ is in the cokernel. Then $(\varphi,\psi)_{\p M}=0$ for any $\psi\in C^{\infty}(\p M)$ which implies that the trace of $\varphi$ on the boundary is zero. Together with the fact $\delta_A d_\A \varphi=0$, we conclude that $\varphi=0$ and the theorem is proved.
\end{proof}


\end{document}